\newcommand{\R}{\mathbb{R}}                                     
\newcommand{\X}{\mathbb{X}}                                     
\newcommand{\Y}{\mathbb{Y}}                                     
\newcommand{\Z}{\mathbb{Z}}                                     
\newcommand{\M}{\mathbb{M}}
\newcommand{\Q}{{\mathcal{Q}}}
\providecommand{\abs}[1]{\left\lvert #1 \right\rvert}           
\providecommand{\norm}[1]{\left\lVert #1 \right\rVert}          
\newcommand{\var}{\operatorname{Var}}
\newcommand{\LL}{\mathscr{L}_\text{L}}
\newcommand{\LD}{\mathscr{L}_\text{D}}
\newcommand{\LRL}{\widetilde{\mathscr{L}}_\text{L}}
\newcommand{\LRD}{\widetilde{\mathscr{L}}_\text{D}}
\newcommand{\ts}{\hspace*{0.1em}} 
\DeclareMathOperator*{\argmin}{arg\,min}
\newtheorem{theorem}{Theorem}[section]
\newtheorem{corollary}[theorem]{Corollary}
\newtheorem{lemma}[theorem]{Lemma}
\newtheorem{proposition}[theorem]{Proposition}
\newtheorem{definition}[theorem]{Definition}
\theoremstyle{definition}
\newtheorem{example}[theorem]{Example}
\newtheorem{remark}[theorem]{Remark}
\newtheorem*{remark*}{Remark}
\definecolor{boxback}{gray}{0.95}
\newcommand{\markchange}[1]{#1}
\title{Optimal Reaction Coordinates: Variational Characterization and Sparse Computation}
\author[1]{Andreas Bittracher}
\author[1]{Mattes Mollenhauer}
\author[1]{Péter Koltai}
\author[1,2]{Christof Schütte}
\affil[1]{Department of Mathematics and Computer Science, Freie Universit\"at Berlin, Germany}
\affil[2]{Zuse Institute Berlin, Germany}
\date{}
\begin{document}

\maketitle


\begin{abstract}
Reaction Coordinates (RCs) are indicators of hidden, low-dimensional mechanisms that govern the long-term behavior of high-dimensional stochastic processes. We present a novel and general variational characterization of optimal RCs and provide conditions for their existence. Optimal RCs are minimizers of a certain loss function and reduced models based on them guarantee very good approximation of the long-term dynamics of the original high-dimensional process. 
We show that, for slow-fast systems, metastable systems, and other systems with known good RCs, the novel theory reproduces previous insight. 
Remarkably, the numerical effort required to evaluate the loss function scales only with the complexity of the underlying, low-dimensional mechanism, and not with that of the full system.
\markchange{The theory provided lays the foundation for an efficient and data-sparse computation of RCs via modern machine learning techniques.}
\end{abstract}


\section{Introduction}

Complex high-dimensional dynamical processes, as they occur in molecular dynamics, statistical mechanics or finance, are  often modelled by time- and space-continuous Markov processes on the microscopic level. The reason researchers are interested in such processes is however often not so much the microscopic dynamics itself, but rather the phenomenon that, over long time scales, the system often exhibits much more regularity and much less complexity than the sheer number of degrees of freedom would actually allow for.
The desire to understand the emergence of this macro-scale behaviour, and to simulate it efficiently, motivates the derivation of reduced, in general non-Markovian models. By projecting the full dynamics onto some selected observables, using techniques such as the Mori-Zwanzig formalism~\cite{zwanzig_nonequilibrium_2001}, the reduced model describes the long-term dynamical behaviour of these observables, while discarding the short-term microscopic detail. For most such observables the resulting reduced dynamics is too complex (long-term memory effects, complicated noise process) to be useful. However, particular observables allow for a rather direct representation of the reduced dynamics. In the context of molecular dynamics, such particular observables are known as \emph{reaction coordinates} (RCs) or \emph{collective variables}; we will adopt the former designation herein.

This article illuminates the first, and arguably most crucial step in the model reduction pipeline, which is the selection of the ``correct'' or "optimal" RCs.
We present a novel, comprehensive theory of optimal RCs based on a variational principle that is accessible by modern machine learning methods. Its key features are described below. Its differentiation from previous approaches can be found in Section~\ref{sec:related_work}. The novel theory presented herein has some crucial advantages: 

\paragraph{Variational characterization with dynamical meaning.}

Our theory is the first to explicitly describe a dynamically interpretable variational principle for optimal RCs that goes beyond alternative approaches based on spectral decomposition or autocorrelation~\cite{perez-hernandez_identification_2013,wehmeyer_time-lagged_2018} by avoiding any linearization steps. We derive a loss function that measures how well a given RC and the associated reduced dynamics  preserve the long-term behaviour of the full system. To quantify the discrepancy in the long-term behaviour, a special metric based on two specific, dynamically interpretable and intuitive properties is used. Optimal RCs, i.e., the global minimizers of the loss function, then inherit this interpretability naturally.

\paragraph{Consistency with established theory.}

The new theory covers system classes for which good RCs are predefined by established theory or are intuitively clear. Applied to slow-fast dynamical systems~\cite{pavliotis_multiscale_2008}, or systems with a timescale gap separating slowly and quickly equilibrating sub-processes~\cite{schutte_metastability_2014}, our variational principle correctly characterizes the respective slow components as optimal RCs. 
Moreover, our theory extends the transition manifold framework~\cite{bittracher_transition_2017}, which has been used to successfully identify RCs in several high-dimensional biomolecular systems~\cite{bittracher_data-driven_2018, bittracher_exploring_2021}. By utilizing transition manifold theory, it can be shown how to control the approximation error between the long-term behavior of the original system and the reduced model for optimal RCs.

\paragraph{Sparse sampling of low-rank dynamics.}

Key parts of our variational problem are insensitive to the curse of dimensionality. By interpreting the existence of a latent low-dimensional reduced model as a low-rank like property, and using arguments similar to low-rank matrix approximation~\cite{halko_finding_2011} and compressive sampling~\cite{foucart_mathematical_2013} techniques, we show that approximating our loss function requires only short simulations started from sparsely sampled starting points. More precisely, the number of required starting points does not scale with the dimension of the full system, but only with the dimension and complexity of the reduced system.

\paragraph{Leverage point for modern machine learning.}

The variational problem can be used directly as a loss function in deep learning methods. We expect that a numerical scheme based on a combination of massively parallel dynamical sampling, automatic differentiation and stochastic gradient descent minimization of our loss function will be effective in identifying interpretable RCs in large-scale real-world systems, such as biomolecular complexes.
\\ 

This article is structured as follows: Section~\ref{sec:related_work} compares our approach to related work. Section~\ref{sec:characterization} introduces our characterization of good and optimal RCs, and confirms the compatibility of that characterization with established concepts. Section~\ref{sec:variational} derives the variational principle in form of a loss function for optimal RCs. Section~\ref{sec:numerical_aspects} shows how this loss function can be efficiently approximated via a Monte Carlo method with sparse samples. It also discusses the applicability of modern machine learning methods. Finally, section~\ref{sec:examples} demonstrates the variational principle and the sparse loss function approximation by two synthetic examples. Section~\ref{sec:conclusions} contains the conclusions, and an outlook on future work.

\section{Related work}
\label{sec:related_work}

\paragraph{Characterization of dynamically meaningful RCs.}
Existing definitions of ``good'' RCs were mostly motivated by applications in computational physics and chemistry. However, these field to this date still predominantly use \emph{heuristic} RCs in their model reduction efforts. This practice of manually selecting and combining coordinates from a pool of candidate physical observables such as bond angles or residue distances comes with obvious shortcomings with respect to dynamical meaningfulness, optimality and scalability. Mostly, one is mainly interested in reproducing statistical quantities like the free energy correctly without  any strict characterization of the dynamical meaning. 
We now discuss the most popular of the few approaches to the identification of dynamically meaningful RCs.

The \emph{committor function} is a one-dimensional reaction coordiante, which for some reactant and some product state indicates the probability to hit the latter before hitting the former~\cite{e_towards_2006, maragliano_string_2006}. However, the committor can only be expected to describe the system's rate-limiting processes well for sensible choices of the reactant and product (which obviously requires a priori macro-scale knowledge of the system).

The dominant eigenfunctions of the system's transfer operator (or equivalently its Fokker Planck operator) are mathematically related to committor functions~\cite{sarich_markov_2014}. These eigenfunctions linearly decompose the system into independent sub-processes, which equilibrate with a rate determined by the associated eigenvalue~\cite{schutte_metastability_2014}. Hence, the dominant eigenfunctions have been used as RCs~\cite{mcgibbon_identification_2017}.
While this correspond well with our definition of a good RC, it has been demonstrated that the dominant eigenfunctions themselves can be reduced further, if the associated sub-processes are in some way ``nonlinearly dependent'' on each other~\cite{bittracher_transition_2017}. Our second example system (Section~\ref{sec:example_metastable_circular}) demonstrates this situation. A RC composed of dominant eigenfunctions is therefore in general not optimal in terms of its dimensionality.

The TICA (time-lagged independent component analysis) method constructs RCs as those linear combinations of the original degrees of freedom with the highest autocorrelation~\cite{molgedey_separation_1994, perez-hernandez_identification_2013}. Assuming the system is reversible, the TICA coordinates are the aforementioned eigenfunctions of the transfer operator projected onto the linear basis functions $\psi_i(x)=x_i$~\cite{klus_eigendecompositions_2020}. They therefore suffer from the same non-optimality as the eigenfunction RCs, and in addition from non-optimality due to the overhead of linear approximation.

The most frequently analyzed special case of timescale-separated systems are \emph{slow-fast systems} (see~\cite{pavliotis_multiscale_2008} for a text book introduction). They are characterized by the existence of a coordinate transformation such that the new coordinates can be subdivided into one quickly and one slowly moving part, and the two parts are approximately decoupled. The slow coordinates, which form a parametrization of the system's slow manifold~\cite{wang_slow_2013}, can be considered, and can be used as a good RC of the system~\cite{singer_detecting_2009,froyland_computational_2014}.
We will see later that our characterization encompasses that of slow variables.

The RCs conceptually most alike to the definition presented in this article are the \emph{transition manifold RCs}, proposed in~\cite{bittracher_transition_2017} and further refined in~\cite{bittracher_weak_2020}. Transition manifold theory characterize good RCs as a parametrization of a low-dimensional manifold in a certain function space. This \emph{transition manifold} emerges from the system's transition densities with progressing equilibration. The characterization presented in the present article is a significant generalization of~\cite{bittracher_weak_2020}.

\paragraph{Computational strategies.}

Most of the aforementioned approaches to dynamically meaningful RCs come with a proposed numerical scheme for their computation. The committor function, which satisfies a backward Kolmogorov equation~\cite{e_towards_2006}, can be computed using numerical PDE solvers (although this was never proposed as a practical scheme and a vastly more efficient scheme was proposed soon-after~\cite{maragliano_string_2006}). RCs based on transfer operator eigenfunctions (including TICA) can be computed by an eigendecomposition of a suitable discretization of that operator~\cite{dellnitz_approximation_1999, schutte_direct_1999, deuflhard_robust_2005, perez-hernandez_identification_2013}.
Approaches that characterize RCs as parametrization of some manifold use unsupervised manifold learning methods such as diffusion maps to learn the variables in an equation-free manner~\cite{singer_detecting_2009, froyland_computational_2014, bittracher_transition_2017, bittracher_dimensionality_2020}.

Over time, deep-rooted relationships between the different RCs, as well as extensions and generalizations were discovered (see~\cite{klus_data-driven_2018} for a partial overview), which led to alternative and more efficient schemes for their computation.
While a comprehensive listing would go beyond the scope of this article, we want to point out an emerging trend in these efforts: the formulation of a variational principle for the respective RC. There now exist variational approaches for the committor function~\cite{khoo_solving_2018}, the TICA coordinate~\cite{wehmeyer_time-lagged_2018}, the transfer operator eigenfunctions~\cite{mardt_vampnets_2018}, and related dominant subspaces~\cite{RabbenRayWeber2020}.
Alternative approaches try to maximize timescale separation between slow and fast processes via maximum caliber-based frameworks \cite{Tiwary2020}.
The driving force behind this trend is of course the desire to profit from the impressive performance that modern deep learning and neural network-based methods have demonstrated with regard to their generalization power, robustness to overfitting and seeming immunity to the curse of dimensionality~\cite{berner_modern_2021}.

Notably missing from the above list is however a variational principle for manifold-based RCs. As mentioned before, the characterization presented in this article generalizes the transition manifold, which in turn generalizes the slow manifold, so it can be seen as a completion in that regard.
The variational approach then offers the additional advantage of yielding a closed form of the RC (in some finite-dimensional ansatz space), unlike the aforementioned geometric manifold learning algorithms, which output only discrete point-evaluations of the RC.

\paragraph{Sparsity.}

Our strategy to approximate the loss function from sparse samples of the dynamics shows parallels to other computational techniques that implicitly exploit some form of hidden regularity of the problem.

In a recent publication~\cite{bittracher_probabilistic_2021}, a discrete version of the sparse sampling strategy to time- and space-\emph{discrete} Markov chains was proposed.
There it was assumed that the long-term behavior of a large (i.e., many-state) Markov chain is essentially determined by transitions between certain \emph{aggregates} of these states. It was shown that the aggregates and the transition probabilities between them could be discovered from a vastly undersampled version of the transition matrix of the original chain (obtainable through simulations starting from random states). Some of the key ideas of \cite{bittracher_probabilistic_2021} will be exploited herein by putting them into the continuous setting.

The fundamental idea behind both the discrete and continuous sparse sampling strategies is heavily inspired by the field of compressive sensing, see~\cite{foucart_mathematical_2013} for an introduction. The impressive feat of compressive sensing is its ability to re-construct a signal (a high-dimensional vector) from far less samples than the Nyquist–Shannon sampling theorem would actually demand by means of solving vastly underdetermined linear equations. The necessary assumption is that the system is \emph{sparse} in some basis (for example the frequency domain), though the location or precise number of the sparse entries does not need to be known. In a way, the dominance of a Markov process by a single low-dimensional mechanism can be interpreted as a sort of ``non-linear dynamical sparsity''.

Finally we would like to point out that the apparent similarity of our sampling strategy to randomized matrix low-rank approximation techniques~\cite{halko_finding_2011} like the Nyström method~\cite{drineas_nystrom_2005} or randomized feature approximation~\cite{rahimi_random_nodate} is rather superficial. \markchange{While for these techniques a (nearly) low-rank structure of the target matrix is necessary to achieve low approximation error, they do not interpret this low rank as an underlying structure ``generating'' the matrix}. Indeed, in~\cite{udell_why_2019} it has been argued that an (approximate) low rank is a generic property of large data matrices, and that the attribution of that rank to some ``physical reason'' is in general not possible.
Consequently, this rank and with it the sampling requirement still scales with the size of the matrix (typically logarithmically~\cite{udell_why_2019}). In contrast, our theory asssumes and exploits the existence of an underlying mechanism of fixed low dimension that governs the micro-scale model, which effectively decouples the sampling requirement from the size of the full model.

\section{Characterization of good RCs}
\label{sec:characterization}

This section contains the central definition of good RCs and corresponding conditions on the dynamics for their existence. First, however, we introduce the notation and the dynamical setting.

\subsection{Definition of the dynamics and fundamental assumptions}

Let $\X\subset\R^n$ be a Lebesgue-measurable set (the \emph{state space}) and $(X_t)_{t\in\R^+}$, or short $(X_t)$, be a time- and space-continuous Markov process on $\X$. Let $P^t: \X \times \mathscr{B} \rightarrow [0,1]$ denote the \emph{transition probability function} of $(X_t)$, where $\mathscr{B}$ is the Borel $\sigma$-algebra on $\X$, i.e.,
$$
P^t[x,B] = \operatorname{Prob}[X_{t_0+t} \in B ~|~X_{t_0} = x] \quad \text{for all } t_0\geq 0.
$$
For any $t>0$ and $x\in\X$, $P^t(x,\cdot)$ is a probability measure on $\mathscr{B}$, and $P^t(\cdot,B)$ is a $\mathscr{B}$-measurable function for any $B\in\mathscr{B}$~\cite{rosenblatt1967}. Moreover, let the process be ergodic, such that a unique stationary measure $\mu:\mathscr{B}\rightarrow \R^+_0$ exists. We require $\mu$ to be absolutely continuous with respect to the Lebesgue measure, i.e., there exists a density $\pi:\X\rightarrow \R^+$ such that
$$
\mu(B) = \int_B \pi(x)\ts dx.
$$
\markchange{Moreover, we require that $\pi$ is continuous and strictly positive.}

We also require the $P^t(x,\cdot)$ to be absolutely continuous with respect to the Lebesgue measure. Thus, we may assume that there exists a family of functions $p^t:\X\times \X \rightarrow \R^+$ such that
\begin{equation}
\label{eq:absolute_continuity}
P^t(x,B) = \int_B p^t(x,y)\ts \mathrm{d}y \quad \text{for all } \tau>0,~x\in\X,~B\in\mathscr{B}.
\end{equation}
Many classes of Markov processes are absolutely continuous, including Itô diffusions with smooth coefficients~\cite{krengel_ergodic_1985}.
Also, we assume that the system is reversible with respect to $\pi$, i.e., the \emph{detailed balance equation} holds:
\begin{equation}
\label{eq:detailed_balance}
	p^t(x,y) \pi(x) = p^t(y,x) \pi(y)\quad \text{for all } x,y\in \X,~t\in\mathbb{R}^+.
\end{equation}

The existence of good RCs will be determined by specific properties of the function $p^t$, hence we now examine it more closely. As a function of the second argument, $p^t(x,\cdot)\in L^1$ is the time-$t$ \emph{transition density function} of $(X_t)$, i.e.
$$
p^t(x,\cdot) = \operatorname{Law}\big(X_{t_0+t}~\big|~X_{t_0} = x\big) \quad \text{for all } t_0\geq 0.
$$

On the other hand, $p^t(\cdot,y)$ as a function of the first argument is harder to interpret, and discussed less in the literature of stochastic processes. Let $L^1_\mu(\X)$ be the space equipped with the norm
$$
\|f\|_{L^1_\mu} := \int_\X f(x)\ts d\mu(x).
$$
\markchange{We then have $p^t(\cdot,y)\in L^1_\mu$, since, by reversibility of $X_t$,
\begin{align*}
\int_\X p^t(x,y) \ts d\mu(x) &= \int_\X p^t(x,y) \pi(x)\ts dx \\
&= \int_\X p^t(y,x) \pi(y)\ts dx \leq \|\pi\|_\infty\ts \|p^t(y,\cdot)\|_{L^1} < \infty.
\end{align*}
To distinguish it from the transition density, we call $p^t(\cdot,y)$ the time-$t$ \emph{transition observable} of $y$. }

As a function of \emph{two} arguments, we call $p^t:\X\times\X\rightarrow\R$ the time-$t$ \emph{transition kernel} of $(X_t)$. It can be interpreted as an
element of the space of functions
$L^1_{\mu \times \lambda}( \X^2)$, where
$\mu \times \lambda$ is the product measure on the space
$\X^2$ given by the invariant measure $\mu$
and the Lebesgue measure $\lambda$ on $\X$.
For simplicity, we use the shorthand notation
$$
\mathbb{K} := L^1_{\mu \times \lambda}( \X^2).
$$
Note that by Fubini--Tonelli, we have
\begin{align}
\label{eq:K_norm}
\| p(\ast,\cdot) \|_\mathbb{K} &:= \big\| \left\| p(\ast ,\cdot) \right\|_{L^1(\X)} \big\|_{L^1_\mu(\X)}
\end{align}
as the norm on $\mathbb{K}$, where in~\eqref{eq:K_norm} the inner norm applies to the argument ``$\cdot$'', and the outer norm applies to the argument ``$\ast$'' (this will be a convention from now on).

\subsection{Lumpability and decomposability}
\label{sec:lumpability_decomposability}

We will now introduce two seemingly different conditions for a system/RC pair. Each condition may individually be taken as a definition for what a good RC is. It will turn out, however, that the two conditions are equivalent to each other for reversible systems, so a good RC with respect to one condition is a good RC with respect to the other.

 Let~$r < n$, and $\Z \subset \R^r$ be a domain. We call any function $\xi\in C(\X,\Z)$ an \emph{$r$-dimensional RC}. Later, in particular in Section~\ref{sec:variational}, we will require $\xi$ to be ``smooth'':
\begin{definition}[Smooth RCs]
\label{def:smooth_rc}
	Let~$r < n$, and $\Z \subset \R^r$ be a domain. If $\xi:\X\rightarrow \Z$ fulfils the two assumptions
\begin{enumerate}
	\item $\xi$ is continuously differentiable,
	\item for all $z\in\Z$, the level sets
	$$
	\Sigma_\xi(z) := \left\{ x\in\X~|~\xi(x) = z\right\}
	$$
	are $n-r$-dimensional topological submanifolds of $\X$,
\end{enumerate}
we call $\xi$ a \emph{smooth $r$-dimensional RC}. We denote the function space of smooth RCs from $\X$ to $\Z$ by $S(\X,\Z)$.

\end{definition}
In case where $\xi\in S(\X,\Z)$, the \emph{marginal stationary measure} $\mu_z$ on $\Sigma_\xi(z)$ can be defined by
\begin{align}
\label{eq:def_muz}
	\int_{\Sigma_\xi(z)}	f(x)\ts d\mu_z(x)
	&=\int_{\Sigma_\xi(z)}	f(x) \pi(x) \det\big(\nabla\xi(x)^\intercal \nabla\xi(x)\big)^{-1/2} \ts dH_{n-r}(x),
\end{align}
where $H_{d}$ denotes the $d$-dimensional Hausdorff measure. By $|\Z|$, we denote the Lebesgue-measure of $\Z$.

We emphasize, however, that we do not require smoothness for the following characterizations of good RCs.

\subsubsection*{Lumpability}
The first condition on the process $(X_t)$ is as follows:

\begin{definition}[Lumpability]

If there exists a domain $\Z\subset \R^r$, a function $\xi\in C(\X,\Z)$, a family of time-parametrized functions $p_L^t:\Z\times \X\rightarrow \R^+$ and a lag time $\tau>0$ such that
	\begin{equation}
	\label{eq:lumpability}
	\tag{\text{L}}
	\frac{1}{|\Z|} \big\| p^t(\ast,\cdot) - p^t_L(\xi(\ast),\cdot) \big\|_\mathbb{K} \leq \varepsilon
	\end{equation}
is fulfilled for all $t\geq \tau$, we say the system is \emph{$\varepsilon$-lumpable} with respect to $\xi$.
\end{definition}

In words, lumpability means that for sufficiently large $t$, the transition densities $p^t(x,\cdot)$, i.e., the probabilities of transitions starting from $x$, depend essentially only on the value $\xi(x)$ of the RC at $x$, and not on the precise location of $x$ on the $\xi(x)$-level set of $\xi$.
Condition~\eqref{eq:lumpability} is therefore a sensible definition for a RC that is supposed to describe the effective long-term dynamics of $X_t$.

\begin{remark}
	Our definition of lumpability can be seen as a continuous version of the lumpability condition for discrete Markov chains, originally formulated by Kemeny and Snell~\cite{kemeny_finite_1976}. There it has been shown that the discrete version of lumpability is a necessary and sufficient condition for the Markov chain to be ``compressible'' into a Markov chain between certain \emph{aggregates} of the original chain.
	In general, this compression however leads to a loss of information, i.e.,  restoration of the original chain is in general not possible under the lumpability assumption alone.
\end{remark}

\subsubsection*{Decomposability}
The second condition on $(X_t)$ we will discuss is the following:

\begin{definition}[Decomposability]
If there exists a domain $\Z\subset \R^r$, a function $\xi\in C(\X,\Z)$, a family of time-parametrized functions  $p_D^t:\X\times \Z\rightarrow \R^+$ and a lag time $\tau>0$ such that
\begin{equation}
\label{eq:decomposability}
\tag{\text{D}}
\frac{1}{|\Z|} \big\| p^t(\ast,\cdot) - p_D^t(\ast,\xi(\cdot))\pi(\cdot) \big\|_\mathbb{K}\leq \varepsilon
\end{equation}
is fulfilled for $t\geq \tau$, we say the system is \emph{$\varepsilon$-decomposable} with respect to $\xi$.
\end{definition}

The value of $p^t(x,y)$ can be interpreted as the transition probability from an infinitesimal neighborhood around $x$ to an infinitesimal neighborhood around $y$. Decomposability now holds, among others, for systems for which this transition can be decomposed into two consecutive stages:
\begin{enumerate}
	\item[1)] a slow transition from $x$ to anywhere on the level set $\Sigma_\xi\big(\xi(y)\big)$, for which the (infinitesimal) transition probability is given by $p^t_D(x,\xi(y))$, and
	\item[2)] an instantaneous equilibration on that level set with respect to the invariant density $\pi(y)$, which does no longer depend on the starting point.
\end{enumerate}
Hence, for systems for which this decomposition exists, condition~\eqref{eq:decomposability} again describes a sensible definition of good RCs.
Note however that not only such ``physically decomposable'' systems fulfil the above definition, but that~\eqref{eq:decomposability} describes a more generic ``conceptual decomposability'', as we will see in Section~\ref{sec:examples_reducible}.

\begin{remark}
The above notion of decomposability also has a time- and space-discrete equivalent for discrete Markov chains, which was defined by some of the authors in~\cite{bittracher_probabilistic_2021} (however the condition was called ``deflatability'' there). There it was shown that for Markov chains fulfilling both the lumpability and deflatability condition, a ``compressed'' Markov chain between certain aggregates of the original states exists, and that the full Markov chain can be restored from the compressed chain.
\end{remark}

\begin{remark}
\label{rem:trivial_lumpability_decomposability}
It should be noted that, for large enough lag time, every uniformly ergodic system is
trivially lumpable and decomposable since
\begin{equation}
 \sup_{x \in \X}	\norm{ P^t(x, \cdot) - \mu(\cdot)  }_{TV} =
 \sup_{x \in \X}	\frac{1}{2} \norm{ p^t(x, \cdot) - \pi(\cdot)  }_{L^1} \to 0
\end{equation} as $t \to \infty$
(see for example \cite[Section 3.3 together with Proposition 3(f)]{roberts_general_2004}. Hence, choosing $p_L^t(z,\cdot) = \pi$ and $p_D^t(z,\cdot)=1$ in the above definitions will give lumpability and decomposability with respect to any constant RC since then $p_L^t$ and $p_D^t$ are independent of $x$.
Likewise, every system is trivially lumpable and decomposable, for any tolerance and lag time, with respect to the trivial $n$-dimensional RC $\xi(x)=x$, since choosing $p_L^t(x,\cdot)=p^t(x,\cdot)$ and $p_D^t(\cdot,y) = p^t(\cdot,y)/\pi(y)$ fulfils~\eqref{eq:lumpability} and~\eqref{eq:decomposability} with $\varepsilon=0$.
We emphasize that in this paper we specifically care about systems which are lumpable/decomposable
with respect to intermediate lag times $\tau$ that are much smaller than the equilibration time scale of the system, as well as small dimensions $r$ and $\varepsilon>0$.

Moreover, a system may be lumpable/decomposable with respect to more than one non-trivial combination of $\varepsilon, \tau$ and $r$. In cases where no clear time scale separation exists in the full system, a balance has to be struck between the achievable approximation error of a reduced model built using $\xi$ (acceptable $\varepsilon$), the time scale above which the reduced model is valid (choice of $\tau$), and the dimension of the reduced model (choice of $r$).

Although an interesting objective especially in systems with cascades of time scales, optimizing the choice of $r$ and $\tau$ are not subject of this paper. We will later consider $r$ and $\tau$ to be fixed, and search for corresponding ``optimal'' RCs, i.e. an $r$-dimensional $\xi$ for which~\eqref{eq:lumpability} and/or~\eqref{eq:decomposability} are fulfilled for the smallest possible~$\varepsilon$ (more on that in Section~\ref{sec:variational}).
\end{remark}

\subsubsection*{Connection to Reversibility}
As mentioned above, the two conditions~\eqref{eq:lumpability} and~\eqref{eq:decomposability} are equivalent in reversible systems:

\begin{proposition}
\label{prop:equivalence_reversible_epsilon}
	Let the system be reversible, i.e., let~\eqref{eq:detailed_balance} hold. Then the system is $\varepsilon$-lumpable if and only if it is $\varepsilon$-decomposable.
\end{proposition}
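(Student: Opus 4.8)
The plan is to show that the two defining inequalities are in fact the \emph{same} inequality once reversibility is used to interchange the two arguments of the kernel; hence no estimation is needed and the equivalence holds with identical $\varepsilon$, $\tau$, $\Z$, and $\xi$. First I would make the norm $\norm{\cdot}_\mathbb{K}$ fully explicit. By Fubini--Tonelli the lumpability defect is
\[
\frac{1}{|\Z|}\big\| p^t(\ast,\cdot) - p_L^t(\xi(\ast),\cdot) \big\|_\mathbb{K} = \frac{1}{|\Z|}\int_\X\int_\X \abs{p^t(x,y) - p_L^t(\xi(x),y)}\ts \pi(x)\ts dy\ts dx,
\]
and the deflatability defect is the same double integral of $\abs{p^t(x,y) - p_D^t(x,\xi(y))\pi(y)}\ts\pi(x)$ against $dy\,dx$.

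Next, in the lumpability integral I would simply rename the two dummy variables $x\leftrightarrow y$ (both range over $\X$ against Lebesgue measure) and then apply detailed balance~\eqref{eq:detailed_balance} in the form $p^t(y,x)\pi(y) = p^t(x,y)\pi(x)$. This rewrites the integrand as
\[
\abs{p^t(y,x) - p_L^t(\xi(y),x)}\ts\pi(y) = \pi(x)\,\Big| p^t(x,y) - p_L^t(\xi(y),x)\tfrac{\pi(y)}{\pi(x)} \Big|,
\]
where strict positivity of $\pi$ (guaranteed by assumption) is used to pull out $\pi(x)$. The key step is then to recognise the right-hand side as exactly the deflatability integrand under the explicit identification $p_D^t(x,z) := p_L^t(z,x)/\pi(x)$, since then $p_D^t(x,\xi(y))\pi(y) = p_L^t(\xi(y),x)\pi(y)/\pi(x)$. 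Because $p_L^t\geq 0$ and $\pi>0$, this $p_D^t$ is a legitimate nonnegative function of the required type $\X\times\Z\to\R^+$, so the computation yields an \emph{identity} (not merely an inequality) between the two defects; thus $\varepsilon$-lumpability with respect to $\xi$ implies $\varepsilon$-deflatability with respect to $\xi$.

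For the converse I would observe that the map $p_L^t\mapsto p_D^t$ is an involution, with inverse $p_L^t(z,x) := p_D^t(x,z)\pi(x)$, so running the identical chain of equalities backwards gives the reverse implication with the same $\varepsilon$ and the same $\xi$, $\tau$, $\Z$. I do not expect a genuine analytic obstacle here: the whole argument is an algebraic reindexing enabled by detailed balance. The only points requiring care are (i) justifying Fubini--Tonelli and the variable swap, both harmless because every integrand is nonnegative, so the identity is valid as an equality of values in $[0,\infty]$ and finiteness of one side transfers automatically to the other; and (ii) careful bookkeeping of \emph{which} argument $\xi$ acts on and of the $\pi$-weights, where the strict positivity and continuity of $\pi$ ensure the constructed kernels are well defined.
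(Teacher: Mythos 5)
Your proposal is correct and follows essentially the same route as the paper: both hinge on the identification $p_D^t(x,z) := p_L^t(z,x)/\pi(x)$ (with inverse $p_L^t(z,y) := p_D^t(y,z)\pi(y)$ for the converse) and on detailed balance turning the lumpability defect into the deflatability defect as an exact identity. The only difference is presentational — you expand the $\mathbb{K}$-norm into a double integral and swap dummy variables, whereas the paper manipulates the inner $L^1$ norm first — so there is nothing substantive to add.
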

\begin{proof}
Let~\eqref{eq:lumpability} hold for some family of functions $p_L^t:\Z\times \X\rightarrow \R$. Define the family of functions $p_D^t:\X\times \Z \rightarrow \R$ by
$$
p_D^t(x,z) := \frac{p_L^t(z,x)}{\pi(x)}.
$$
Then
\begin{align*}
	\| p_D^\tau(x,\xi(\cdot))\pi(\cdot) - p^\tau(x,\cdot) \|_{L^1} &= \| p_L^\tau(\xi(\cdot),x)\frac{\pi(\cdot)}{\pi(x)} - p^\tau(x,\cdot) \|_{L^1} \\
	&\overset{\eqref{eq:detailed_balance}}{=} \| p_L^\tau(\xi(\cdot),x)\frac{\pi(\cdot)}{\pi(x)} - p^\tau(\cdot,x)\frac{\pi(\cdot)}{\pi(x)} \|_{L^1}\\
	&= \|p_L^\tau(\xi(\cdot),x) - p^\tau(\cdot,x) \|_{L^1_\mu} \pi(x)^{-1}.
\end{align*}
Hence, with this $p^t_D$, it holds
\begin{align*}
	\frac{1}{|\Z|} \big\| p^\tau(\ast,\cdot) - p^\tau_D(\ast,\xi(\cdot))\pi(\cdot) \big\|_\mathbb{K}  &= \frac{1}{|\Z|} \int_\X \big\| p^\tau(x,\cdot) - p^\tau_D(x,\xi(\cdot))\pi(\cdot) \big\|_{L^1} \ts d\mu(x) \\
	&= \frac{1}{|\Z|} \int_\X \big\| p_L^\tau(\xi(x),\cdot) - p^\tau(x,\cdot) \big\|_{L^1_\mu} \pi(x)^{-1} \ts d\mu(x) \\
	&= \frac{1}{|\Z|} \big\| p_L^\tau(\xi(\ast),\cdot) - p^\tau(\ast,\cdot) \big\|_\mathbb{K} \leq \varepsilon.
\end{align*}

For the reverse direction, let~\eqref{eq:decomposability} hold for some family of functions $p_D^t:\X\times \Z\rightarrow \R$ and define $p_L^t: \Z\times \X\rightarrow\R$ by
$$
p_L^t(z,y):= p_D^t(y,z) \pi(y).
$$
We then obtain $\varepsilon$-lumpability with respect to $\xi$ and this $p^t_L$ by performing the above transformations in reverse.
\end{proof}

\begin{remark}
	Proposition~\ref{prop:equivalence_reversible_epsilon} implies that, whenever a system is $\varepsilon$-lumpable or $\varepsilon$-decomposable, there exists a reduced transition kernel $\tilde{p}^t:\Z\times\Z \rightarrow \R^+$, such that
	$$
	\big\| p^t(\ast,\cdot) - \tilde{p}^t(\xi(\ast),\xi(\cdot)) \pi(\cdot) \big\|_\mathbb{K} \leq \varepsilon
	$$
	for $t\geq \tau$. Under this condition, knowing the triple $(\xi, \tilde{p}^t,\pi)$ allows us to approximately re-construct the effective long-term dynamics of the full system. This represents the theoretical justification of our definition of a good RC. 
\end{remark}

\subsection{Examples of lumpable and decomposable systems}
\label{sec:examples_reducible}

An obvious question is whether the conditions~\eqref{eq:lumpability} and~\eqref{eq:decomposability} are consistent with established concepts of reducible systems.
We hence now present several systems with known good RCs and show that they are indeed either lumpable or decomposable.

\subsubsection{Existence of a transition manifold}

Lumpability is strongly connected to the concept of the so-called \emph{transition manifold}, which was introduced a few years ago by some of the authors in order to formulate a geometrical approach to the computation of optimal RCs~\cite{bittracher_transition_2017}.

\begin{definition}[Reducibility and Transition Manifold]
	For $\varepsilon>0, r\leq n, \tau\in \R^+_0$, we call the system \emph{$(\varepsilon,r,\tau)$-reducible} if there exists an $r$-dimensionally parametrizable manifold~$\M\subset \{p^\tau(x,\cdot),~x\in\X\}$
so that for all $x\in\X$
\begin{equation}
\label{eq:reducible}
\big\| \Q\left( x \right) - p^\tau(x,\cdot) \big\|_{L^2_{1/\pi}} \leq \varepsilon,
\end{equation}
where $\Q: \X \rightarrow \M$ is the nearest point projection onto $\M$,
\begin{equation}
\label{eq:Qprojection}
	\Q\left( x \right) := \argmin_{p\in\M}\big\|p^\tau(x,\cdot) - p\big\|_{L^2_{1/\pi}}.
\end{equation}
We call any $\M$ that fulfills~\eqref{eq:reducible} a \emph{transition manifold} of the system.
\end{definition}

The intuition behind~\eqref{eq:reducible} is that the set of all transition densities $\{p^t(x,\cdot),~x\in\X\}$ clusters $\varepsilon$-closely around an $r$-dimensional manifold $\M$ with respect to the $L^2_{1/\pi}$ norm. The density $\Q(x)$ can be understood as the reduced density $p^\tau_L(\xi(x)\cdot)$ in~\eqref{eq:lumpability}. Let $\mathcal{E}:\M\rightarrow \R^r$ be any parametrization of $\M$. It can be shown that the \emph{transition manifold RC}
\begin{equation}
\label{eq:TMRC}
\xi(x):= \mathcal{E}\left(\mathcal{Q}(x)\right)
\end{equation}
is a good RC, in the sense that the projection error of the leading transfer operator eigenfunctions onto $\xi$ is at most $\varepsilon$~\cite{bittracher_transition_2017}.

The computational strategy behind the transition manifold approach now is to sample the set $\{p^t(x,\cdot),~x\in\X\}$ (for example by randomly selecting starting points $x_m\in\X,~m=1,2,\ldots$ and estimating the $p^\tau(x_m,\cdot)$ by parallel simulation), and applying an \emph{unsupervised manifold learning method} (such as diffusion maps~\cite{coifman_diffusion_2006}) to the samples. This strategy has been successfully applied to multiple high-dimensional molecular systems and was confirmed to produce physically interpretable RCs~\cite{bittracher_data-driven_2018, bittracher_dimensionality_2020}.

The concept of the transition manifold was recently re-visited and extended to a broader class of dynamical systems~\cite{bittracher_weak_2020} (the central object now being called~\emph{weak transition manifold}), by requiring the closeness to the manifold now only averaged over the level sets of $\mathcal{Q}$. It however requires the restriction to the following class of ``smooth'' transition manifolds:

\begin{definition}[Smooth manifold]
\label{def:smooth_manifold}
	We call an $r$-dimensionally parametrizable manifold $\M\subset \{p^\tau(x,\cdot),~x\in\X\}$ \emph{smooth}, if for its nearest point projection $\Q$ holds
	\begin{enumerate}
		\item $\Q$ is continuously differentiable,
		\item for each $p\in\M$, the level set
			$$
			\Sigma_\Q(p) = \{x\in\X~|~\Q(x)=p\}
			$$
			is an $n-r$-dimensional topological submanifold of $\X$.
	\end{enumerate}
\end{definition}

With that we can introduce the weak transition manifold:

\begin{definition}[Weak reducibility and weak transition manifold]
	For $\varepsilon>0, r\leq n, \tau\in \R^+_0$, we call the system \emph{weakly $(\varepsilon,r,\tau)$-reducible} if there exists an $r$-dimensionally parametrizable smooth manifold~$\M\subset \{p^\tau(x,\cdot),~x\in\X\}$
so that for all $x\in\X$
\begin{equation}
\label{eq:weakly_reducible}
\int_{\Sigma_\Q(\Q(x))} \big\| \Q\left( x \right) - p^\tau(x',\cdot) \big\|_{L^2_{1/\pi}} \ts d\mu_{\Q(x)}(x') \leq \varepsilon,
\end{equation}
where $\mu_\mathcal{Q}$ is the marginal invariant measure on $\Sigma_\Q(p)$, defined analogously to~\eqref{eq:def_muz}. We call any smooth manifold $\M$ that fulfils~\eqref{eq:weakly_reducible} a \emph{weak transition manifold}.
\end{definition}
It is easy to confirm that every reducible system with a smooth transition manifold is weakly reducible.

The closeness condition for the weak transition manifold~\eqref{eq:weakly_reducible} is similar in nature to the lumpability condition~\eqref{eq:lumpability}, in that the original transition density functions, $p^\tau(x,\cdot)$, are close so some projected or reduced transition density function, $\mathcal{Q}\left(p^\tau(x,\cdot)\right)$ and $p_L(\xi(x),\cdot)$, respectively. Indeed, we will now show that a system which is weakly $(\varepsilon,r,\tau)$-reducible is $\varepsilon$-lumpable with respect to the transition manifold RC $\xi$. Lumpability, and equivalently decomposability, are therefore generalizations of weak reducibility. 

\begin{proposition}
	\label{prop:lumpability}
	Let the system be weakly $(\varepsilon,r,\tau)$-reducible with transition manifold $\mathbb{M}$. Assume that there exists a parametrization $\mathcal{E}:\M\rightarrow\Z\subset R^r$ such that $\xi$ defined by~\eqref{eq:TMRC} is a smooth RC (see Definition~\ref{def:smooth_rc}). Then there exists a family of functions $p_L^t:\Z\times \X\rightarrow \R^+$ such that~\eqref{eq:lumpability} is fulfilled with respect to $\xi$.
\end{proposition}
\begin{proof}
Let $\mathcal{E}:\M\rightarrow\R^r$ be any parametrization of the transition manifold $\M$ and let $\Z := \mathcal{E}(\M)$. In particular, $\mathcal{E}:\M\rightarrow\Z$ is one-to-one. Define the RC $\xi$ by~\eqref{eq:TMRC} and the reduced density $p_L^\tau$ by
	$$
	p_L^\tau(z,\cdot): = \mathcal{E}^{-1}(z).
	$$
Then for any $z\in\Z$ there exists an $x\in\X$ such that $z=\mathcal{E}\big(\Q(x)\big)$ and hence, due to $\mathcal{E}$ being one-to-one, $\Sigma_\xi(z) = \Sigma_{\Q}\big(\Q(x)\big)$. For this $x$ it holds
\begin{align*}
	\int_{\Sigma_\xi(z)} \big\| p_L^\tau(z,\cdot) - p^\tau(x',\cdot) \big\|_{L^1} \ts d\mu_{z}(x') &= \int_{\Sigma_\Q(\Q(x))} \big\| p_L^\tau\big( \mathcal{E}(\Q(x)),\cdot) - p^\tau(x',\cdot) \big\|_{L^1}\ts d\mu_{\Q(x)}(x')\\
	&= \int_{\Sigma_\Q(\Q(x))} \big\| \mathcal{E}^{-1}\big( \mathcal{E}(\Q(x))\big) - p^\tau(x',\cdot) \big\|_{L^1}\ts d\mu_{\Q(x)}(x')\\
	&= \int_{\Sigma_\Q(\Q(x))} \big\| \Q(x) - p^\tau(x',\cdot) \big\|_{L^1}\ts d\mu_{\Q(x)}(x').
\end{align*}
By application of the co-area formula, and the definition~\eqref{eq:def_muz} of $\mu_z$ we get
\begin{align*}
\frac{1}{|\Z|}\big\|p^\tau(\ast,\cdot) - p_L^\tau(\xi(\ast),\cdot)\big\|_\mathbb{K} &= \frac{1}{|\Z|} \int_\Z\int_{\Sigma_\xi(z)} \big\| p_L^\tau(z,\cdot) - p^\tau(x',\cdot) \big\|_{L^1} \ts d\mu_{z}(x')\ts dz,
\intertext{which finally, with $\|f\|_{L^1} = \|f/\pi\|_{L^1_\mu} \leq \|f/\pi\|_{L^2_\mu} = \|f\|_{L^2_{1/\pi}}$, becomes}
	&\leq \frac{1}{|\Z|} \int_\Z \int_{\Sigma_\Q(\Q(x))} \big\| \Q(x) - p^\tau(x',\cdot) \big\|_{L^2_{1/\pi}}\ts d\mu_{\Q(x)}(x')\ts dz\\
&\leq \sup_{z\in\Z} \int_{\Sigma_\Q(\Q(x))} \big\| \Q(x) - p^\tau(x',\cdot) \big\|_{L^2_{1/\pi}}\ts d\mu_{\Q(x)}(x')\\
&\overset{\eqref{eq:weakly_reducible}}{\leq} \varepsilon.
\end{align*} \qedhere
\end{proof}

\subsubsection{Slow- and fast components}
\label{sec:slowfast}


Next, we show that a process defined by an SDE with slow and fast components is decomposable with respect to the slow component.
We utilize a multiscale decomposition of the corresponding infinitesimal generator, together with a multiscale ansatz for the transition densities $p^t(x,\cdot)$. In that we utilize well-known averaging techniques from~\cite{pavliotis_multiscale_2008}.

It will prove advantageous to consider the transition densities as densities with respect to the stationary density . That is, we define for each $x\in\X$ the density $q^t(x,\cdot)$ with respect to $\pi$ by
$$
p^t(x,\cdot) = q^t(x,\cdot)\pi(\cdot).
$$

Let the components of $(X_t)$ be dividable into two processes $(Y_t)$ on $\Y$, $(Z_t)$ on $\Z$, such that $\X = \Y \oplus \Z$,
$$
(X_t) = \begin{pmatrix}Y_t\\ Z_t\end{pmatrix},
$$
and the components fulfil the system of SDEs
\begin{equation}
	\label{eq:smoluchowski_divided}
	\begin{aligned}
		\varepsilon d Y_t &= -\nabla_y V(Y_t,Z_t)\ts dt + \sqrt{\varepsilon}\sigma d W^\Y_t\\
		 d Z_t &= -\nabla_z V(Y_t,Z_t)\ts dt + \sigma d W^\Z_t,
	\end{aligned}
\end{equation}
with potential $V:\X\rightarrow \R$, scalar diffusion parameter $\sigma>0$ and $(W^\Y_t)$, $(W^\Z_t)$ standard Wiener processes on $\Y$ and $\Z$, respectively. A timescale separation parameter $0 <\varepsilon \ll 1$ ensures that $(Y_t)$ evolves ``fast'' compared to $(Z_t)$.

The evolution of $q^t$ under $(X_t)$ is now governed by the Fokker Planck equation
$$
\partial_t q^t(x,\cdot) = \mathcal{L}_\varepsilon q^t(x,\cdot),\quad q^0(x,\cdot) = \delta_x(\cdot),
$$
where the infinitesimal generator $\mathcal{L}_\varepsilon$ has the multiscale structure
\begin{align}
	\label{eq:Leps_Smolu}	\mathcal{L}_\varepsilon &= \frac{1}{\varepsilon} \mathcal{L}_y + \mathcal{L}_z,\\
	\intertext{with the two compontents}
	\notag	\mathcal{L}_y &= \frac{\sigma^2}{2} \Delta_y - \nabla_y V\cdot \nabla_y, \\
	\notag	\mathcal{L}_z &= \frac{\sigma^2}{2} \Delta_z - \nabla_z V\cdot \nabla_z.
\end{align}

We want to investigate to what extent $q^t(x,\cdot)$, and by extension $p^t(x,\cdot)$, can be approximated by an ``essential transition density'' that depends only on the slow variable $z$ in the setting $\varepsilon \ll 1$, $t=\mathcal{O}(1)$. To this end, we make the multiscale ansatz for $q^t$
\begin{equation}
	\label{eq:multiscale_ansatz_q_Smolu}
	q^t(x,\cdot) = q_0^t(x,\cdot) + \varepsilon q_1^t(x,\cdot) + \mathcal{O}(\varepsilon^2).
\end{equation}

Inserting~\eqref{eq:multiscale_ansatz_q_Smolu} into~\eqref{eq:Leps_Smolu} gives
\begin{equation}
\label{eq:multiscale_equation_Smolu}
\partial_t q_0^t(x,\cdot) + \varepsilon \partial_t q_1^t(x,\cdot) + \mathcal{O}(\varepsilon^2) = \frac{1}{\varepsilon} \mathcal{L}_y q_0^t(x,\cdot) + \mathcal{L}_y q_1^t(x,\cdot) + \mathcal{L}_z q_0^t(x,\cdot) + \mathcal{O}(\varepsilon).
\end{equation}
Comparing the terms of order $\varepsilon^{-1}$ gives
$$
\mathcal{L}_y q_0^t(x,\cdot) = 0.
$$
By~\cite[Sec.~10.2]{pavliotis_multiscale_2008}, $\mathcal{L}_y$ has a one-dimensional null space consisting of functions constant in $y$.  Hence, $q_0^t(x,\cdot)$ is independent of $y$ in its second argument, and so
$$
q^t\left((y,z),(y',z')\right) = q^t_0\left((y,z),z'\right) + \mathcal{O}(\varepsilon).
$$
Therefore, for $t=\mathcal{O}(1)$, the transition density $p^t(x,\cdot)$ takes the form
$$
p^t\left((y,z),(y',z')\right) = q_0^t((y,z),z')\ts\pi\left((y',z')\right) + g\left((y,z),(y',z')\right)\ts\pi\left((y',z')\right) .
$$
for some function $g\in \mathcal{O}(\varepsilon)$.
Applying the $\|\cdot\|_\mathbb{K}$-norm, it follows that the system is decomposable with respect to the RC $\xi(x)=z$, i.e.,
$$
\frac{1}{|\Z|} \left\|q_0^t(\ast,\xi(\cdot))\pi(\cdot) - p^t(\ast,\cdot)\right\|_\mathbb{K} = \mathcal{O}(\varepsilon).
$$

\markchange{
\begin{remark}
	The above result, or to be precise, the multiscale extension~\eqref{eq:multiscale_ansatz_q_Smolu}, holds only for times $t$ up to $\mathcal{O}(1)$, as one can show that the second term $q_1$ is bounded uniformly only in $t$~\cite{koltai_multiscale_2018}, hence may grow linearly in $t$.
	However, we are only interested in lumpability on moderate lag times anyway, as every system becomes trivially lumpable for $t\rightarrow \infty$ (see Remark~\ref{rem:trivial_lumpability_decomposability}).
\end{remark}
\begin{remark}
	While not necessarily in the scope of this paper, we can continue the multiscale analysis in order to derive an evolution equation for $q^t_0$. See Appendix~\ref{sec:extended_multiscale} for details.
\end{remark}
}

\subsubsection{Generator spectral gap}
\label{sec:spectralgap}

Finally, we show that systems whose infinitesimal generator exhibits a spectral gap of sufficient size, such as metastable systems~\cite{dellnitz_approximation_1999, deuflhard_identification_2000}, are decomposable with respect to some non-trivial RC.

Consider again the transition densities $q^t$ with respect to the stationary measure, and its Fokker Planck equation
$$
\partial_t q^t(x,\cdot) = \mathcal{L} q^t(x,\cdot),\quad q^0(x,\cdot) = \delta_x(\cdot).
$$
We assume that the spectrum of the generator $\mathcal{L}$ is real, non-positive and discrete, and denote the eigenvalues of $\mathcal{L}$ in descending order:
$$
0 = \theta_0 > \theta_1 \geq \theta_2 \geq \ldots ,
$$
repeated by geometric multiplicity. Let furthermore $\varphi_i$ denote the eigenfunction belonging to $\theta_i$. The $\varphi_i$ then form an orthonormal basis of $L^2_\mu(\X)$~\cite{nuske_spectral_2021}. As $q^t(x,\cdot) \in L^1_\mu(\X) \cap L^\infty_\mu(\X)$ for any $x$, $q^t(x,\cdot) \in L^2_\mu(\X)$. We can then describe the evolution of the density $q^t(x,\cdot)$ by
\begin{equation}
\label{eq:qt_evolution_eigenpairs}
q^t(x,\cdot) = \sum_{i=0}^\infty e^{\theta_i t} c_i(x) \varphi_i(\cdot)
\end{equation}
where $c_i:\X\rightarrow \R$ are some functions that do not depend on the $\theta_i$.

Now, we additionally assume that the eigenvalues can be separated into a dominant and a non-dominant part. Specifically, we assume there exists an index $K>0$, so that the ratio
\begin{equation}
\label{eq:theta_ratio}
\frac{\theta_K}{\theta_{K+1}} >0
\end{equation}
is small.
This situation for example occurs if the system is \emph{metastable}, i.e., there exists a partition $\X = \X_1\cup\ldots \cup \X_K$ of state space into disjoint regions, and the system is almost invariant on each $\X_i$ on relatively long time scales.
For a precise introduction of metastability and its connection to the dominant spectrum see~\cite{schutte_metastability_2014}.

Now suppose that there exists an integer $r\leq K$ and a RC $\xi:\X\rightarrow \R^r$ such that $\xi$ parametrizes the dominant $\varphi_i$, i.e., there exist some functions $\tilde{\varphi}_i:\R^r\rightarrow \R$ such that
\begin{equation}
\label{eq:eigenfunction_parametrization}
\varphi_i = \tilde{\varphi}_i\circ \xi \quad i=1,\ldots,K.
\end{equation}
Such a $\xi$ always exists, as one can always choose
$$
r:=K,\quad \xi_i :=\varphi_i\quad \text{and}\quad \tilde{\varphi}_i(z) := z_i.
$$
Often, however, the dominant eigenfunctions possess some common lower-dimensional, non-linear parametrization. For metastable systems, this is for example the case if the metastable sets $\X_1,\ldots,\X_K$ are connected by just a small number of transition pathways. An example system with five metastable sets, but one common transition pathway, hence a one-dimensional RC, can be found in Section~\ref{sec:example_metastable_circular}.

Let $\varepsilon>0$ be some small constant. We now show that, if $t=t(\varepsilon)$ is large enough, and the spectral ratio~\eqref{eq:spectral_gap_lumpability} is small enough, then the system is $\varepsilon$-decomposable with respect to any $\xi$ fulfilling~\eqref{eq:eigenfunction_parametrization}. To see this, split the right hand side of~\eqref{eq:qt_evolution_eigenpairs} into the dominant and the non-dominant part:
\begin{equation*}
\label{eq:qt_evolution_eigenpairs_split}
q^t(x,\cdot) = \sum_{i=0}^K e^{\theta_i t} c_i(x) \varphi_i(\cdot) + \sum_{i=K+1}^\infty e^{\theta_i t} c_i(x) \varphi_i(\cdot) .
\end{equation*}
Due to~\eqref{eq:eigenfunction_parametrization}, the first summand depends only on $\xi$:
$$
\sum_{i=0}^K e^{\theta_i t} c_i(x) \varphi_i(\cdot) = \underbrace{\sum_{i=0}^K e^{\theta_i t} c_i(x) \tilde{\varphi}_i\left(\xi(\cdot)\right)}_{=: p_D^t\left(x,\xi(\cdot)\right)}.
$$
To show that the system is $\varepsilon$-decomposable with respect to $\xi$ and $p_D^t$, we thus have to ensure that
\begin{equation}
\label{eq:spectral_gap_lumpability}
\frac{1}{|\Z|} \left\| \sum_{i=K+1}^\infty e^{\theta_i t} c_i(\ast) \varphi_i(\cdot) \pi(\cdot) \right\|_\mathbb{K} \leq \varepsilon.
\end{equation}
Since the $\theta_i$ are decreasing, it holds
\begin{align*}
	\left\| \sum_{i=K+1}^\infty e^{\theta_i t} c_i(\ast) \varphi_i(\cdot) \pi(\cdot) \right\|_\mathbb{K} &\leq e^{\theta_{K+1} t} \left\|  \sum_{i=K+1}^\infty c_i(\ast) \varphi_i(\cdot) \pi(\cdot) \right\|_\mathbb{K},
	\shortintertext{and further}
	&\leq e^{\theta_{K+1} t} \left( \underbrace{\left\|  \sum_{i=0}^K c_i(\ast) \varphi_i(\cdot) \pi(\cdot) \right\|_\mathbb{K}}_{=:\tilde{C}} + \left\|  \sum_{i=0}^\infty c_i(\ast) \varphi_i(\cdot) \pi(\cdot) \right\|_\mathbb{K}\right).
\end{align*}
The first summand, denoted $\tilde{C}$, is finite as a finite sum. As
$$
p^t(\ast,\cdot) = q^t(\ast,\cdot)\pi(\cdot) = \sum_{i=0}^\infty e^{\theta_i t} c_i(\ast) \varphi_i(\cdot) \pi(\cdot),
$$
and $1=e^{\theta_i 0}$, the second summand can be interpreted as the $\mathbb{K}$-norm of $p^t\big|_{t=0}$:
$$
\left\|  \sum_{i=0}^\infty c_i(\ast) \varphi_i(\cdot) \pi(\cdot) \right\|_\mathbb{K} = \lim_{t\rightarrow 0}\left\| p^t(\ast,\cdot)\right\|_\mathbb{K}.
$$
By writing out the $\mathbb{K}$-norm, it can easily be seen that $\left\| p^t(\ast,\cdot)\right\|_\mathbb{K}=1$ for all $t>0$, and thus
$$
\lim_{t\rightarrow 0}\left\| p^t(\ast,\cdot)\right\|_\mathbb{K} = 1.
$$
With $C:=\frac{\tilde{C}+1}{|\Z|}$, we therefore get
$$
\frac{1}{|\Z|} \left\| \sum_{i=K+1}^\infty e^{\theta_i t} c_i(\ast) \varphi_i(\cdot) \pi(\cdot) \right\|_\mathbb{K} \leq C\ts e^{\theta_{K+1}t}.
$$
Hence, if we choose
\begin{equation}
\label{eq:decomposability_timescale}
t\geq t(\varepsilon):= \frac{1}{\theta_{K+1}} \log\left(\frac{\varepsilon}{C}\right),
\end{equation}
then~\eqref{eq:spectral_gap_lumpability} is fulfilled, and the system is $\varepsilon$-decomposable.

Now, of course, for a decreasing decomposability tolerance ($\varepsilon\rightarrow 0$), we will need to increase the lag time ($t(\varepsilon)\rightarrow \infty$). In this situation, every system becomes trivially decomposable, see Remark~\ref{rem:trivial_lumpability_decomposability}.
In order to claim \emph{non-trivial} decomposability for the lag time $t(\varepsilon)$, we have to ensure that $p^t_D(x,\cdot)$ remains ``expressive'', i.e., is not close to the identity. For this we need to show that the factors $e^{\theta_i t(\varepsilon)}, i=1,\ldots,K$, have not yet decayed substantially. Note that
\begin{align*}
e^{\theta_it(\varepsilon)} &= e^{\frac{\theta_i}{\theta_{K+1}} \log\left(\frac{\varepsilon}{C}\right)}\\
&\geq e^{\frac{\theta_K}{\theta_{K+1}} \log\left(\frac{\varepsilon}{C}\right)} \quad \text{for}\quad i=1,\ldots,K.
\end{align*}
We need to ensure that the spectral ratio $\frac{\theta_K}{\theta_{K+1}}$ decreases sufficiently quickly for $\varepsilon\rightarrow 0$. Assuming 
\begin{equation}
\label{eq:rhoepsilon}
\frac{\theta_K}{\theta_{K+1}} = \mathcal{O}\left( \log(\varepsilon)^{-1} \right) \quad (\varepsilon \rightarrow 0),
\end{equation}
we have
\begin{align*}
e^{\theta_it(\varepsilon)} &= \mathcal{O}(1) \quad (\varepsilon\rightarrow 0),
\shortintertext{and thus also, for all $x\in\X$,}
\left|p_D^{t(\varepsilon)}(x,\cdot) - 1 \right| &= \mathcal{O}(1)\quad (\varepsilon\rightarrow 0).
\end{align*}
\begin{example}
	The assumption \eqref{eq:rhoepsilon} on the spectral ratio is readily fulfilled in systems with a natural spectral gap, such as systems with energy barriers. Consider the overdamped Langevin equation
	\begin{equation}
		\label{eq:metastable_SDE}
		dX_t = -\nabla V(X_t) + \sqrt{2 \varepsilon}\ts dW_t,
	\end{equation}
	describing the motion of a particle in the potential $V$ at temperature $\varepsilon$. 	Now assume that $V$ possesses multiple local minima separated by saddle points, such as the double well potential depicted in Figure~\ref{fig:doublewell}. Transitions out of the regions around the local minima (called \emph{metastable sets}) are induced by the stochastic forcing in~\eqref{eq:metastable_SDE}, with a rate governed by~$\varepsilon$. To be specific, the expected exit rate $\rho$ from one mimimum to another across a barrier of height $\Delta V$ is given by the van't~Hoff-Arrhenius law
	$$
	\rho \sim e^{-\frac{\Delta V}{\varepsilon}},
	$$
	where ``$\sim$'' indicates asymptotic equality in the sense of $\rho = C \exp(-\Delta V/\varepsilon)(1+\mathcal{O}(\sqrt{\varepsilon}\log\varepsilon))$, where $C$ is an $\varepsilon$-independent prefactor \cite{Bovier1}. 
	
	\begin{figure*}
		\centering
		\includegraphics{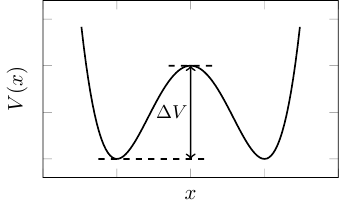}
		\caption{One-dimensional double well potential. At low temperature, transitions accross the central energy barrier are rare. The barrier height $\Delta V$, along with the system temperature, determines the transition rate.}
		\label{fig:doublewell}
	\end{figure*}
	
	Recall that the $i$-th eigenvalue $\theta_i$ is the negative of the equilibration rate of the $i$-th slowest sub-process. At low enough temperature, the metastable transitions are hence associated with the dominant eigenvalues. The equilibration within the wells on the other hand resembles an Ornstein-Uhlenbeck process, whose eigenvalues are independent of the temperature~\cite{klus_data-driven_2020}. There hence exists an index $K\geq 1$ (which depends on the number of local minima and barriers between them) such that \cite{Bovier2}
	\begin{equation*}
	\begin{aligned}
		\theta_i &\sim e^{\frac{\Delta V_i}{\varepsilon}}, & i&=1,\ldots,K, \\
		\theta_i &\sim 1 ,& i &\geq K+1,
	\end{aligned}
	\end{equation*}
	where $\Delta V_i$ denotes the barrier height between the $i$-th pair of local minima.
	For the spectral ratio then holds
	$$
	\frac{\theta_K}{\theta_{K+1}} \sim e^{-\frac{\Delta V_K}{\varepsilon}} .
	$$

For a given small enough temperature $\varepsilon$, the system is now non-trivially $\varepsilon$-decomposable on the timescale $t(\varepsilon)$ predicted by~\eqref{eq:decomposability_timescale}. One one hand, by construction of $t(\varepsilon)$, the fast processes have already decayed.
On the other hand, for the slow timescales holds, for some $C,\tilde{C}$,
\begin{align*}
	e^{\theta_i t(\varepsilon)}
	&= e^{\tilde{C}e^{-\frac{\Delta V_K}{\varepsilon}} \log\left(\frac{\varepsilon}{C}\right)}.
\end{align*}
As one can easily check,
$$
	\lim_{\varepsilon \to 0} e^{\tilde{C}e^{-\frac{\Delta V_K}{\varepsilon}} \log\left(\frac{\varepsilon}{C}\right)} = 1,
$$
so for small enough $\varepsilon$, the slow processes have not decayed yet on timescale $t(\varepsilon)$. 
Hence, the ``natural'' spectral ratio $e^{-\frac{\Delta V}{\varepsilon}}$ has a faster decay rate in $\varepsilon$ than what is required by assumption~\eqref{eq:rhoepsilon}.

\end{example}

\section{A variational principle for optimal RCs}
\label{sec:variational}

From here on, we consider the reduced dimension $r\leq n$ as well the lag time $\tau$ to be predetermined and fixed.
For simplicity of notation, we will omit the lag time as parameter for the transition kernel, i.e., define~$p(\cdot,\cdot):= p^\tau(\cdot,\cdot)$,~$p_L(\cdot,\cdot):=p_L^\tau(\cdot,\cdot)$,~$p_D(\cdot,\cdot):=p_D^\tau(\cdot,\cdot)$.

Our goal is now to find an optimal RC, i.e., a function $\xi:\X\rightarrow \Z\subset \R^r$ that fulfills~\eqref{eq:lumpability} or equivalently~\eqref{eq:decomposability} for the smallest possible $\varepsilon \geq 0$.
Hence formally, we seek the minimizers of the following loss functions:

\begin{definition}[Lumpability and decomposability loss function]
The nonlinear functional $\LL:C(\X,\Z) \rightarrow \R^+$, defined by
	\begin{equation}
	\label{eq:LL}
	\LL(\vartheta) := \frac{1}{|\Z|} \min_{p_L:\Z\times\X\rightarrow \R^+} \|p(\ast,\cdot) - p_L(\vartheta(\ast),\cdot) \|_\mathbb{K}
	\end{equation}
is called the \emph{lumpability loss function} of the system.

The nonlinear functional $\LD:C(\X,\Z) \rightarrow \R^+$, defined by
	\begin{equation}
	\label{eq:LD}
	\LD(\vartheta) := \frac{1}{|\Z|} \min_{p_D:\X\times\Z\rightarrow \R^+} \| p(\ast,\cdot) - p_D(\ast,\vartheta(\cdot)) \pi(\cdot) \|_\mathbb{K}
	\end{equation}
is called the \emph{decomposability loss function} of the system.
\end{definition}

From the equivalence of lumpability and decomposability (Proposition~\ref{prop:equivalence_reversible_epsilon}) it follows that for every $\vartheta\in C(\X,\Z)$ holds
\begin{equation}
\label{eq:equivalence_loss_functions}
\LL(\vartheta) = \LD(\vartheta),
\end{equation}
hence we can find the optimal RC with respect to both~\eqref{eq:lumpability} and~\eqref{eq:decomposability} by solving
\begin{equation}
\label{eq:minimizer_LL}
\xi := \argmin_{\vartheta\in C(\X,\Z)} \LL(\vartheta),
\end{equation}
at least in theory.
Evaluating, let alone minimizing $\LL$ (or $\LD$) would however prove difficult, due to the minimization over an infinite-dimensional function space involved in its definition (the search for the functions $p_L$ or $p_D$ in every step). In the following section, we therefore derive \emph{essentially} equivalent reformulations of $\LL$ and $\LD$ that do not involve this minimization.

\subsection{Differential formulation of lumpability and decomposability}

The condition~\eqref{eq:lumpability} can be interpreted as the closeness of the transition densities $p(x,
\cdot)$ to some reduced reference density $p_L(\xi(x),\cdot)$. This implies that all densities $p(x,\cdot)$ whose starting points $x$ lie on one level set of $\xi$ are close \emph{to each other}.
Likewise, condition~\eqref{eq:decomposability} can be seen as the closeness of the transition observables $p(\cdot,y)$ to some reduced reference observable $p_D(\cdot,\xi(y))\pi(y)$, which implies that all observables $p(\cdot,y)$ whose end points $y$ lie on one level set of $\xi$ are close \emph{to each other}. This observation motivates the following ``differential'' characterization of lumpability and decomposability, for which we now require the smoothness assumptions on $\xi$ and its level sets from Definition~\ref{def:smooth_rc}.

\begin{definition}[Differential lumpability]
If there exists a domain $\Z\subset \R^r$ and a smooth RC $\xi\in S(\X,\Z)$ such that
	\begin{equation}
    	\label{eq:differential_lumpability}
    	\tag{L'}
        \frac{1}{|\Z|}\int_\Z \int_{\Sigma_\xi(z)} \int_{\Sigma_\xi(z)} \norm{ p(x^{(1)}, \cdot) -  p(x^{(2)}, \cdot) }_{L^1 }\ts d\mu_z\big(x^{(1)}\big) \ts d\mu_z\big(x^{(2)}\big)\ts dz \leq \varepsilon
    \end{equation}
is fulfilled, we say the system is \emph{differentially $\varepsilon$-lumpable} with respect to $\xi$.
\end{definition}

\begin{definition}[Differential decomposability]

If there exists a domain $\Z\subset \R^r$ and a smooth RC $\xi\in S(\X,\Z)$ such that
    \begin{equation}
    	\label{eq:differential_decomposability}
    	\tag{D'}
        \frac{1}{|\Z|}\int_\Z \int_{\Sigma_\xi(z)} \int_{\Sigma_\xi(z)} \norm{ p(\cdot,y^{(1)})/\pi(y^{(1)}) -  p(\cdot,y^{(2)})/\pi(y^{(2)}) }_{L^1_\mu}\ts d\mu_z\big(x^{(1)}\big) \ts d\mu_z\big(x^{(2)}\big)\ts dz \leq \varepsilon.
    \end{equation}
is fulfilled, we say the system is \emph{differentially $\varepsilon$ decomposable} with respect to $\xi$.
\end{definition}

As one easily sees using a triangle inequality argument, the differential conditions imply the original conditions, and the original conditions \emph{almost} imply the differential conditions:

\begin{lemma}
\label{lem:differential-epsilon-lumpability}
    If the system is $\varepsilon$-lumpable with respect to $\xi$, then the system is differentially $2\varepsilon$-lumpable with respect to $\xi$.
    \par
    Conversely, if the system is differentially $\varepsilon$-lumpable with respect to $\xi$, then the system is $\varepsilon$-lumpable with respect to $\xi$.
\end{lemma}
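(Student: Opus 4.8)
The plan is to reduce both implications to a single disintegration identity and then apply the triangle inequality for the forward direction and Jensen's inequality for the converse. The starting point is that any reference of the form $p_L(\xi(\cdot),\cdot)$ is constant along each level set $\Sigma_\xi(z)$, so the coarea definition of $\mu_z$ lets me rewrite the $\mathbb{K}$-norm as an iterated integral over level sets,
\[
\frac{1}{|\Z|}\norm{p(\ast,\cdot)-p_L(\xi(\ast),\cdot)}_\mathbb{K}
= \frac{1}{|\Z|}\int_\Z\int_{\Sigma_\xi(z)}\norm{p(x,\cdot)-p_L(z,\cdot)}_{L^1}\ts d\mu_z(x)\ts dz .
\]
I would establish this identity first, since it is the common backbone of both directions and directly exposes the level-set averaging that the differential conditions encode.

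For the forward direction (\eqref{eq:lumpability}$\Rightarrow$\eqref{eq:differential_lumpability}, constant $2\varepsilon$) I fix a $p_L$ realizing $\varepsilon$-lumpability. For $x^{(1)},x^{(2)}\in\Sigma_\xi(z)$ I insert $\pm p_L(z,\cdot)$ and use $\norm{p(x^{(1)},\cdot)-p(x^{(2)},\cdot)}_{L^1}\le\norm{p(x^{(1)},\cdot)-p_L(z,\cdot)}_{L^1}+\norm{p_L(z,\cdot)-p(x^{(2)},\cdot)}_{L^1}$. Integrating both arguments over $\Sigma_\xi(z)$ and then over $\Z$, each of the two summands reduces to the right-hand side of the identity above once the ``spectator'' argument is integrated out, producing the factor $2$ and the bound $2\varepsilon$.

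For the converse (\eqref{eq:differential_lumpability}$\Rightarrow$\eqref{eq:lumpability}, constant $\varepsilon$) I must exhibit a witness $p_L$, and the canonical choice is the level-set average $p_L(z,\cdot):=\frac{1}{\mu_z(\Sigma_\xi(z))}\int_{\Sigma_\xi(z)}p(x',\cdot)\ts d\mu_z(x')$, which is nonnegative and depends only on $z$. Writing $p(x,\cdot)-p_L(z,\cdot)$ as the level-set average of the differences $p(x,\cdot)-p(x',\cdot)$ and invoking convexity of $\norm{\cdot}_{L^1}$ gives $\norm{p(x,\cdot)-p_L(z,\cdot)}_{L^1}\le \frac{1}{\mu_z(\Sigma_\xi(z))}\int_{\Sigma_\xi(z)}\norm{p(x,\cdot)-p(x',\cdot)}_{L^1}\ts d\mu_z(x')$. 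Substituting into the identity and integrating in $x$ and $z$ reproduces exactly the differential-lumpability integral, so $\LL(\xi)\le\varepsilon$ with no loss.

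The main obstacle is the measure bookkeeping on the level sets. Both the ``spectator'' integration in the forward step and the averaging in the converse step must run against the \emph{conditional} probability measure on $\Sigma_\xi(z)$ (equivalently, one normalizes by the total mass $\mu_z(\Sigma_\xi(z))$), and this normalization has to be tracked consistently against the coarea measure $\mu_z$ appearing in the disintegration; obtaining exactly the constants $2\varepsilon$ and $\varepsilon$ hinges on this accounting. The remaining points are routine: measurability of $z\mapsto p_L(z,\cdot)$ and of the inner level-set integrals, finiteness of the $L^1(\X)$ norms so that Fubini--Tonelli applies, and admissibility ($\R^+$-valuedness) of the constructed $p_L$, all of which follow from $p\in\mathbb{K}$ together with the continuity and strict positivity of $\pi$.
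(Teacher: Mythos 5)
Your proposal is correct and follows essentially the same route as the paper's proof: the forward direction inserts $\pm p_L(z,\cdot)$ and applies the triangle inequality to get the factor $2$, and the converse takes the level-set average of $p(x',\cdot)$ as the witness $p_L$ and uses the integral triangle (Jensen) inequality. The only cosmetic difference is that you carry an explicit normalization $\mu_z(\Sigma_\xi(z))^{-1}$, whereas the paper simply invokes that the $\mu_z$ are probability measures.
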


\begin{proof}
    Assume that the system is $\varepsilon$-lumpable, i.e., \eqref{eq:lumpability} holds for some function $p_L$. Then we have
   \begin{align*}
        \frac{1}{|\Z|}\int_\Z \int_{\Sigma_\xi(z)} \int_{\Sigma_\xi(z)} &\norm{ p(x^{(1)}, \cdot) -  p(x^{(2)}, \cdot) }_{L^1 }\ts d\mu_z\big(x^{(1)}\big) \ts d\mu_z\big(x^{(2)}\big)\ts dz \\
        &= \frac{1}{|\Z|}\int_\Z \int_{\Sigma_\xi(z)} \int_{\Sigma_\xi(z)} \big\| p(x^{(1)}, \cdot) - \underbrace{p_L(z,\cdot)}_{=p_L(\xi(x^{(1)}),\cdot)} + \underbrace{p_L(z,\cdot)}_{=p_L(\xi(x^{(2)}),\cdot)} -  p(x^{(2)}), \cdot) \Big\|_{L^1 }\ts d\mu_z\big(x^{(1)}\big) \ts d\mu_z\big(x^{(2)}\big)\ts dz \\
        &\leq \frac{1}{|\Z|}\int_\Z \int_{\Sigma_\xi(z)} \big\| p(x^{(1)}, \cdot) - p_L(\xi(x^{(1)}),\cdot) \big\|_{L^1} \ts d\mu_z(x^{(1)})\ts dz + \frac{1}{|\Z|}\int_\Z \int_{\Sigma_\xi(z)} \big\| p(x^{(2)}, \cdot) - p_L(\xi(x^{(2)}),\cdot) \big\|_{L^1} \ts d\mu_z(x^{(2)})\ts dz \\
        &\leq  2 \varepsilon.
   \end{align*}

   For the reverse statement, assume that~\eqref{eq:differential_lumpability} holds. Define
   $$
   p_L(z,\cdot) := \int_{\Sigma_\xi(z)} p(x',\cdot)\ts d\mu_z(x').
   $$
   This $p_L$ exists because $p(\cdot,y)\in L^1_\mu$ for all $y\in\X$.

	Recall that all $\mu_z$ are probability measures on the respective $\Sigma_\xi(z)$. Then
   \begin{align*}
   	\frac{1}{|\Z|}\int_\Z \int_{\Sigma_\xi(z)} &\big\| p_L(z,\cdot) - p(x,\cdot) \big\|_{L^1} \ts d\mu_{z}(x)\ts dz \\
   		&= \frac{1}{|\Z|}\int_\Z \int_{\Sigma_\xi(z)} \Big\| \int_{\Sigma_\xi(z)} p(x',y) - p(x,y) \ts d\mu_z(x') \Big\|_{L^1}\ts d\mu_z(x)\ts dz \\
   		&\leq \frac{1}{|\Z|}\int_\Z \int_{\Sigma_\xi(z)} \int_{\Sigma_\xi(z)} \big\| p(x',\cdot) - p(x,\cdot) \big\|_{L^1}\ts d\mu_z(x')\ts d\mu_z(x)\ts dz\\
   		&\overset{\eqref{eq:differential_lumpability}}{\leq} \varepsilon.
   \end{align*}
\end{proof}

\begin{lemma}
\label{lem:differential-epsilon-decomposability}
    If the system is $\varepsilon$-decomposable
    with respect to $\xi$, then the system is differentially $2\varepsilon$-decomposable with respect to $\xi$.
    \par
    Conversely, if the system is differentially $2\varepsilon$-decomposable with respect to $\xi$, then the system is $\varepsilon$-decomposable with respect to $\xi$.
\end{lemma}

\begin{proof}
	Assume that the system is $\varepsilon$-decomposable, i.e., \eqref{eq:decomposability} holds for some function $p_D$. Then we have
	\begin{align*}
        \frac{1}{|\Z|}\int_\Z \int_{\Sigma_\xi(z)} &\int_{\Sigma_\xi(z)} \norm{ p(\cdot,y^{(1)})/\pi(y^{(1)}) -  p(\cdot,y^{(2)})/\pi(y^{(2)}) }_{L^1_\mu}\ts d\mu_z\big(y^{(1)}\big) \ts d\mu_z\big(y^{(2)}\big)\ts dz \\
        &= \frac{1}{|\Z|}\int_\Z \int_{\Sigma_\xi(z)} \int_{\Sigma_\xi(z)} \norm{ p(\cdot,y^{(1)})/\pi(y^{(1)}) - p_D(\cdot,z) + p_D(\cdot,z) - p(\cdot,y^{(2)})/\pi(y^{(2)}) }_{L^1_\mu}\ts d\mu_z\big(y^{(1)}\big) \ts d\mu_z\big(y^{(2)}\big)\ts dz \\
        &\leq \frac{1}{|\Z|}\int_\Z \int_{\Sigma_\xi(z)} \norm{ p(\cdot,y^{(1)})/\pi(y^{(1)}) - p_D(\cdot,\xi(y^{(1)}))}_{L^1_\mu}\ts d\mu_z\big(y^{(1)}\big)\ts dz \\
        &\qquad + \frac{1}{|\Z|}\int_\Z \int_{\Sigma_\xi(z)} \norm{ p(\cdot,y^{(2)})/\pi(y^{(2)}) - p_D(\cdot,\xi(y^{(2)}))}_{L^1_\mu}\ts d\mu_z\big(y^{(2)}\big)\ts dz\\
        &= \frac{2}{|\Z|}\int_\X \norm{ p(\cdot,y) - p_D(\cdot,\xi(y))\pi(y)}_{L^1_\mu}\ts dy \\
        &= \frac{2}{|\Z|} \big\| \| p(\ast,\cdot) - p_D(\ast,\cdot)\pi(\cdot) \|_{L^1} \big\|_{L^1_\mu} \overset{\eqref{eq:decomposability}}{\leq}  2 \varepsilon.
   \end{align*}

   For the reverse statement, define
   $$
   p_D(\cdot,z) := \int_{\Sigma_\xi(z)} p(\cdot,y')/\pi(y')\ts d\mu_z(y').
   $$
This $p_D$ exists because $p(x,\cdot)\in L^1$ for all $x\in\X$. Then
	\begin{align*}
	\frac{1}{|\mathbb{Z}|} \big\| p(\ast,\cdot) - p_D(\ast,\xi(\cdot))\pi(\cdot) \big\|_\mathbb{K} &= \frac{1}{|\Z|}\big\|  \| p(\ast,\cdot) - p_D(\ast,\xi(\cdot))\pi(\cdot)\|_{L^1} \big\|_{L^1_\mu}\\
	&=	\frac{1}{|\Z|}\int_\Z \int_{\Sigma_\xi(z)} \Big\|p_D(\cdot,z) - p(\cdot,y)/\pi(y) \Big\|_{L^1_\mu}\ts d\mu_z(y)\ts dz \\
   		& = \frac{1}{|\Z|}\int_\Z \int_{\Sigma_\xi(z)} \int_{\X} \Big| \int_{\Sigma_\xi(z)} p(x,y')/\pi(y') - p(x,y)/\pi(y) d\mu_z(y')\ts \Big|\ts d\mu(x)\ts d\mu_z(y)\ts dz \\
   		& \leq \frac{1}{|\Z|}\int_\Z \int_{\Sigma_\xi(z)} \int_{\Sigma_\xi(z)} \big\| p(\cdot,y')/\pi(y') - p(\cdot,y)/\pi(y) \big\|_{L^1_\mu}\ts d\mu_z(y') d\mu_z(y)\ts dz \\
   		&\overset{\eqref{eq:differential_decomposability}}{\leq} \varepsilon.
   \end{align*}
\end{proof}

\subsection{Differential loss functions}

We can now define \emph{differential} versions of the loss functions $\LL$ and $\LD$ in which the hard-to-identify terms $p_L$ and $p_D$ no longer appear:

\begin{definition}[differential lumpability and differential decomposability loss function]
The nonlinear functional $\LRL:S(\X,\Z) \rightarrow \R^+$, defined by
	\begin{equation}
	\label{eq:LVL}
	\LRL(\vartheta) := \frac{1}{|\Z|}\int_\Z \int_{\Sigma_\vartheta(z)} \int_{\Sigma_\vartheta(z)} \norm{ p(x^{(1),\cdot}) -  p(x^{(2)},\cdot) }_{L^1}\ts d\mu_z\big(x^{(1)}\big) \ts d\mu_z\big(x^{(2)}\big)\ts dz
	\end{equation}
is called the \emph{differential lumpability loss function} of the system.

The nonlinear functional $\LRD:S(\X,\Z) \rightarrow \R^+$, defined by
	\begin{equation}
	\label{eq:LVD}
	\LRD(\vartheta) := \frac{1}{|\Z|}\int_\Z \int_{\Sigma_\vartheta(z)} \int_{\Sigma_\vartheta(z)} \norm{ p(\cdot,y^{(1)})/\pi(y^{(1)}) -  p(\cdot,y^{(2)})/\pi(y^{(2)}) }_{L^1_\mu}\ts d\mu_z\big(x^{(1)}\big) \ts d\mu_z\big(x^{(2)}\big)\ts dz
	\end{equation}
is called the \emph{differential decomposability loss function} of the system.
\end{definition}

Note that, unlike $\LL$ and $\LD$, $\LRL$ and $\LRD$ are in general neither identical to each other, nor to $\LL$. The following result characterizes the relationship between the different loss functions:

\begin{corollary}
	\label{cor:comparison loss functions}
	For any $\vartheta\in S(\X,\Z)$ holds
	\begin{align*}
		\LL(\vartheta) &\leq \LRL(\vartheta) \leq 2\ts \LL(\vartheta), \\
	\shortintertext{and}
		\LL(\vartheta) &\leq \LRD(\vartheta) \leq 2\ts \LL(\vartheta).
	\end{align*}
\end{corollary}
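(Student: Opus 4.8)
The plan is to read the two differential-equivalence lemmas (Lemma~\ref{lem:differential-epsilon-lumpability} and Lemma~\ref{lem:differential-epsilon-deflatability}) through the lens of the loss functions, and then splice them together using the equivalence $\LL = \LD$ from~\eqref{eq:equivalence_loss_functions}. The key observation is that each loss function is nothing but the optimal tolerance in its corresponding $\varepsilon$-condition: by definition $\LL(\vartheta)$ equals the infimum over $p_L$ of the left-hand side of~\eqref{eq:lumpability} (with $\xi=\vartheta$), whereas $\LRL(\vartheta)$ equals the left-hand side of~\eqref{eq:differential_lumpability} \emph{exactly}, and analogously for the deflatability pair. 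Hence every inequality between $\varepsilon$-conditions translates directly into an inequality between loss values, and no new estimation is required.

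For the first chain I would invoke Lemma~\ref{lem:differential-epsilon-lumpability}. Its forward implication shows that any $p_L$ realizing a tolerance $\varepsilon$ in~\eqref{eq:lumpability} forces the differential left-hand side to be at most $2\varepsilon$; since the differential quantity $\LRL(\vartheta)$ does not depend on $p_L$, taking the infimum over $p_L$ yields $\LRL(\vartheta) \leq 2\,\LL(\vartheta)$ immediately, with no need to assume the minimum in~\eqref{eq:LL} is attained. For the lower bound I would use the reverse implication, whose proof constructs the explicit averaged density $p_L(z,\cdot) = \int_{\Sigma_\vartheta(z)} p(x',\cdot)\, d\mu_z(x')$ and bounds the ordinary lumpability tolerance it achieves by the differential quantity $\LRL(\vartheta)$; since $\LL(\vartheta)$ is the infimum over all admissible $p_L$, this gives $\LL(\vartheta) \leq \LRL(\vartheta)$. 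Concatenating the two bounds yields $\LL(\vartheta) \leq \LRL(\vartheta) \leq 2\,\LL(\vartheta)$.

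For the second chain I would run the identical argument with Lemma~\ref{lem:differential-epsilon-deflatability} replacing Lemma~\ref{lem:differential-epsilon-lumpability} (the forward implication for the upper bound, the explicit averaged observable $p_D(\cdot,z) = \int_{\Sigma_\vartheta(z)} p(\cdot,y')/\pi(y')\, d\mu_z(y')$ from the reverse implication for the lower bound), obtaining $\LD(\vartheta) \leq \LRD(\vartheta) \leq 2\,\LD(\vartheta)$, and then substitute $\LD(\vartheta) = \LL(\vartheta)$ from~\eqref{eq:equivalence_loss_functions} to land at $\LL(\vartheta) \leq \LRD(\vartheta) \leq 2\,\LL(\vartheta)$. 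The only point demanding genuine care is the bookkeeping of tolerances: one must feed the \emph{infimal} tolerance into the forward implications and read the differential value off the reverse implications, rather than treating each lemma as a statement about a single fixed $\varepsilon$. There is no underlying analytic obstacle—the triangle-inequality estimates and the averaging constructions that do the real work already live inside the two lemmas, so this corollary is essentially their restatement in loss-function language.
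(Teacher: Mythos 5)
Your proposal is correct and follows essentially the same route as the paper, which simply cites Lemma~\ref{lem:differential-epsilon-lumpability} for the first chain and Lemma~\ref{lem:differential-epsilon-deflatability} together with~\eqref{eq:equivalence_loss_functions} for the second; you merely spell out the translation between $\varepsilon$-tolerances and loss values that the paper leaves implicit. Your observation that the upper bounds need only the infimum over $p_L$ (resp.\ $p_D$), not its attainment, is a correct and slightly more careful reading than the paper's use of $\min$ in~\eqref{eq:LL} and~\eqref{eq:LD}.
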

\begin{proof}
	The first pair of inequalities follows directly from Lemma~\ref{lem:differential-epsilon-lumpability}. The second pair of inequalities follows from Lemma~\ref{lem:differential-epsilon-decomposability} and~\eqref{eq:equivalence_loss_functions}.
\end{proof}

Hence, for arbitrary (non-optimal) RCs $\vartheta$, we cannot expect $\LRL(\vartheta)$ or $\LRD(\vartheta)$ to be similar to $\LL(\vartheta)$. However, under the assumption that the system is indeed $\varepsilon$-lumpable for small $\varepsilon$, we can expect their \emph{minima} to be similar:

\begin{corollary}
	Let $\xi$ be an optimal RC, defined by~\eqref{eq:minimizer_LL}, and set $\varepsilon:=\mathscr{L}_L(\xi)$. Let $\xi_L$ and $\xi_D$ be minimizers of $\LRL$ and $\LRD$, respectively. Then
	\begin{align*}
		\LRL(\xi) \leq 2\varepsilon \qquad\text{and}\qquad 	\LRD(\xi) \leq 2\varepsilon.
	\end{align*}
	Moreover,
	\begin{align*}
		\LL(\xi_L) \leq 2\varepsilon \qquad\text{and}\qquad \LL(\xi_D) \leq 2\varepsilon.
	\end{align*}
\end{corollary}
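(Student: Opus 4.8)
The plan is to derive all four bounds purely from the two-sided comparison in Corollary~\ref{cor:comparison loss functions}, combined with the defining minimality of each reaction coordinate; no new integral estimates over level sets are needed, since the heavy lifting was already done in Lemmas~\ref{lem:differential-epsilon-lumpability} and~\ref{lem:differential-epsilon-deflatability}.

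First I would dispatch the two bounds on $\xi$. Because $\xi$ is by definition a minimizer of $\LL$ with $\LL(\xi)=\varepsilon$, I simply invoke the \emph{upper} halves of Corollary~\ref{cor:comparison loss functions}, namely $\LRL(\xi)\le 2\,\LL(\xi)$ and $\LRD(\xi)\le 2\,\LL(\xi)$. Substituting $\LL(\xi)=\varepsilon$ yields $\LRL(\xi)\le 2\varepsilon$ and $\LRD(\xi)\le 2\varepsilon$ immediately. These two estimates also serve as the anchor for the remaining two bounds.

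Next I would treat $\xi_L$ and $\xi_D$ by chaining three facts each. For $\xi_L$: the \emph{lower} bound $\LL(\xi_L)\le\LRL(\xi_L)$ from Corollary~\ref{cor:comparison loss functions}; the minimality of $\xi_L$ for $\LRL$, which gives $\LRL(\xi_L)\le\LRL(\xi)$; and the bound $\LRL(\xi)\le 2\varepsilon$ just established. Combining them in order gives $\LL(\xi_L)\le\LRL(\xi_L)\le\LRL(\xi)\le 2\varepsilon$. The argument for $\xi_D$ is verbatim with $\LRD$ replacing $\LRL$: using $\LL(\xi_D)\le\LRD(\xi_D)$, the minimality $\LRD(\xi_D)\le\LRD(\xi)$, and $\LRD(\xi)\le 2\varepsilon$, one obtains $\LL(\xi_D)\le 2\varepsilon$.

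The proof is essentially a bookkeeping exercise, so I do not anticipate a genuine obstacle; the single point requiring care is orienting each inequality correctly — using the \emph{upper} comparison bound when passing from $\LL$ at the optimal $\xi$ out to the differential losses, but the \emph{lower} comparison bound when passing back from a differential loss to $\LL$ at its own minimizer. I would also note that the existence of the minimizers $\xi_L$ and $\xi_D$ is part of the hypothesis, so no separate existence argument is required, and that the minimality steps are what make the differential losses $\LRL,\LRD$ usable surrogates for $\LL$ despite the factor-of-two gap established in Corollary~\ref{cor:comparison loss functions}.
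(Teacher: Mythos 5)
Your proposal is correct and matches the paper's own argument: the first pair of bounds follows from applying the upper half of Corollary~\ref{cor:comparison loss functions} at $\vartheta=\xi$, and the second pair from the chain $\LL(\xi_L)\leq\LRL(\xi_L)\leq\LRL(\xi)\leq 2\varepsilon$ (and likewise for $\xi_D$). Your remark about carefully orienting the upper versus lower comparison bounds is exactly the point on which the argument turns.
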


\begin{proof}
We show the assertions for $\LRL$. For $\LRD$, the proof is identical.

The first inequality follows from applying Corollary~\ref{cor:comparison loss functions} to $\vartheta=\xi$. The second inequality then follows from
$$
\LL(\xi_L) \overset{\text{Cor.}~\ref{cor:comparison loss functions}}{\leq} \LRL(\xi_L) \overset{\text{Def.}~\xi_L}{\leq} \LRL(\xi) \leq 2\varepsilon.
$$
\end{proof}

\begin{remark}
The above variational principle 	implies that a minimizer $\eta$ to $\LRL$ or $\LRD$ is not necessarily a strict minimizer of $\LL$. However, the difference between the minima will be at most $\varepsilon$. In other words, the system will be $2\varepsilon$-lumpable and -decomposable with respect to $\eta$. Thus, for practical purposes, we can expect
\begin{align}
\label{eq:LRL_optimization_problem}
\xi_L &:= \argmin_{\vartheta\in S(\X,\Z)} \LRL(\vartheta)\\
\shortintertext{and}
\label{eq:LRD_optimization_problem}
\xi_D &:= \argmin_{\vartheta\in S(\X,\Z)} \LRD(\vartheta)
\end{align}
to be ``quasi-optimal'' RCs.
\end{remark}

\section{Numerical approximation of the loss function}
\label{sec:numerical_aspects}

In order to solve the above optimization problems, the loss functions $\LRL$ and $\LRD$, and, depending on the optimization scheme, also their gradients, need to be evaluated numerically for candidate RCs $\vartheta$. The presumed high dimension $n$ of the state space however presents several challenges, which are discussed in the following.

\subsection{Sampling requirements of the decomposability loss function}
\label{sec:montecarlo}
Note that at the core of both $\LRL$ and $\LRD$ lies the evaluation of the transition density $p^t$ at certain start and end points. The function $p^t$ is however not known analytically in practice, and must be estimated empirically by simulations of the dynamics. A critical question is thus how many of these estimates are required in order to approximate $\LRL$ or $\LRD$ up to a given tolerance. In particular, an exponential dependency of that number on the dimension $n$ would be fatal. We will now show that, if the system is indeed highly lumpable with respect to some RC $\xi$ (which in general is different from the candidate RC $\vartheta$), the differential decomposability loss function $\LRD$ can be approximated very efficiently. Crucially, $\xi$ does not have to be known, its implied existence is sufficient to guarantee the efficiency.

By expanding the $\|\cdot\|_{L^1_\mu}$ norm in~\eqref{eq:LVD}, we can write~$\LRD$~as
\begin{align}
\LRD(\vartheta) &= \frac{1}{|\Z|}\int_\Z \int_{\Sigma_\vartheta(z)} \int_{\Sigma_\vartheta(z)} \int_\X \left| \frac{p(x,y^{(1)})}{\pi(y^{(1)})} - \frac{p(x,y^{(2)})}{\pi(y^{(2)})} \right|\pi(x)\ts dx\ts d\mu_z(y^{(1)})\ts d\mu_z(y^{(2)})\ts dz. \label{eq:LVD_expanded}
\intertext{or, by changing the integration order, as}
	&= \frac{1}{|\Z|} \int_\X \left(\int_\Z \int_{\Sigma_\vartheta(z)} \int_{\Sigma_\vartheta(z)} \left| \frac{p(x,y^{(1)})}{\pi(y^{(1)})} - \frac{p(x,y^{(2)})}{\pi(y^{(2)})} \right|\ts d\mu_z(y^{(1)})\ts d\mu_z(y^{(2)})\ts dz\right) \ts \pi(x)\ts dx. \label{eq:LVD_expanded2}
\end{align}

Exact evaluation of the outermost integral in~\eqref{eq:LVD_expanded2} would require analytical knowledge of the transition kernel $p$. Specifically, it would require knowledge of \emph{all} transition densities, i.e., of $p(x,\cdot)$ for \emph{all} starting points $x\in \X$, which we cannot assume in practice. We can however assume that a sufficiently precise approximation of a certain \emph{limited} number of transition densities $p(x^{(k)},\cdot),~k=1,\ldots,M$ can be obtained. This approximation would typically be realized by parallel simulation of the stochastic dynamics starting from $x^{(k)}$, thus creating a point cloud that is distributed according to $p(x^{(k)},\cdot)$. One can then apply density estimation techniques such as kernel- or spectral density estimation to obtain an analytic expression of $p(x^{(k)},\cdot)$.
In any realistic scenario, we would however not to be able to approximate $p(x^{(k)},\cdot)$ for a uniform covering of $\X$ by a sufficient amount of points,
due to the typically high dimension of $\X$. For example, a regular grid-based covering of the unit cube $[0,1]^n$ with spacing $\delta>0$ would require $M=\left(\frac{1}{\delta}\right)^n$ grid points.
This exponential dependence of $M$ on the dimension renders classical numerical quadrature schemes infeasible, leaving Monte Carlo (MC) quadrature as the only viable option for solving the integral over $\X$.

To quantify the MC error in $M$ for this integral, define
\begin{align}
\label{eq:MCintegrand}	f(x) &:=	 \frac{1}{|\Z|}\int_\Z \int_{\Sigma_\vartheta(z)} \int_{\Sigma_\vartheta(z)} \left| \frac{p(x,y^{(1)})}{\pi(y^{(1)})} - \frac{p(x,y^{(2)})}{\pi(y^{(2)})} \right|\ts d\mu_z(y^{(1)})\ts d\mu_z(y^{(2)})\ts dz, \\
\label{eq:MCintegral_exact}	I(f) &:= \int_\X f(x) \ts d\mu(x), \\
\label{eq:MCintegral}	I_M(f) &:= \frac{1}{M} \sum_{i=1}^M f(x^{(k)}),\quad x^{(k)} \sim \mu~i.i.d..
\end{align}
For the expected approximation error then holds~\cite[p.~77]{kalos_monte_2009}
\begin{align}
	\mathbb{E}\left[ \left| I(f) - I_M(f) \right| \right]&= \frac{\var_\mu(f)}{\sqrt{M}}, \label{eq:MCerror}
\shortintertext{where $\var_\mu(f)$ denotes the variance of $f$ with respect to $\mu$, defined by}
	\var_\mu(f) &= \mathbb{E}_\mu\left[f^2\right] - \left(\mathbb{E}_\mu[f]\right)^2. \notag
\end{align}
The independence of the convergence rate $1/\sqrt{M}$ from the dimension is what gives MC methods an edge above conventional methods, at least in theory. However, the \emph{effective} convergence speed, and hence the required number of start points $x^{(k)}$, is highly influenced by the prefactor $\var_\mu(f)$.

We will show now that for highly lumpable systems, $\var_\mu(f)$ tends to be substantially smaller than for non-lumpable systems. The intuitive explanation is that, for systems that are lumpable with respect to some RC $\xi$, $f$ is essentially constant along every level set of~$\xi$, hence only the variation of $f$ along $\xi$ contributes to $\var_\mu(f)$. This holds even in the candidate RC $\vartheta$ that appears in the definition of $f$ is not close to $\xi$. The intuition is formalized by the following theorem:

\begin{theorem}
\label{thm:f_variance}
Assume that the system is $\varepsilon$-lumpable with respect to $\xi:\X\rightarrow \Z$ and the effective density $p_L:\Z\times\X\rightarrow \R$. Define $f_L:\Z\rightarrow \R$ by
\begin{equation}
\label{eq:MCintegrandL}
f_L(z) := \frac{1}{|\Z|}\int_\Z \int_{\Sigma_\vartheta(z')} \int_{\Sigma_\vartheta(z')} \left| \frac{p_L(z,y^{(1)})}{\pi(y^{(1)})} - \frac{p_L(z,y^{(2)})}{\pi(y^{(2)})} \right|\ts d\mu_z(y^{(1)})\ts d\mu_z(y^{(2)})\ts dz'.
\end{equation}
Furthermore, define the \emph{effective invariant density} by
\begin{equation}
	\bar{\pi}(z) := \int_{\Sigma_\xi(z)}\pi(x)\ts d\sigma_z(x)
\end{equation}
where $\sigma_z$ denotes the surface measure on $\Sigma_\xi(z)$. Then there exists a constant $C>0$ such that
$$
\left| \var_\mu(f) - \var_\mu(f_L\circ\xi) \right| \leq 2(1+C^2)\|f_L\circ \xi\|_{L^1_\mu} \|\bar{\pi}\|_\infty \varepsilon + \mathcal{O}(\varepsilon^2).
$$
\end{theorem}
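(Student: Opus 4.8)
The plan is to show that $f$ and $f_L\circ\xi$ are close in $L^1_\mu$ and that this closeness controls the difference of their variances, with the uniform bounds needed for the second step supplying the constant $C$ and the factor $\|\bar\pi\|_\infty$. Write $\delta := f - f_L\circ\xi$. First I would derive the pointwise estimate
\[
\bigl| f(x) - f_L(\xi(x)) \bigr| \le \frac{2}{|\Z|}\,\bigl\| p(x,\cdot) - p_L(\xi(x),\cdot) \bigr\|_{L^1}.
\]
Since $f(x)$ and $f_L(\xi(x))$ are built from the same outer average over $z$ and over the pair of level sets $\Sigma_\vartheta(z)$, their difference is an average of $\bigl| |A_f(y^{(1)})-A_f(y^{(2)})| - |A_L(y^{(1)})-A_L(y^{(2)})| \bigr|$, where $A_f(y)=p(x,y)/\pi(y)$ and $A_L(y)=p_L(\xi(x),y)/\pi(y)$. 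The reverse triangle inequality bounds this by $|(A_f-A_L)(y^{(1)})| + |(A_f-A_L)(y^{(2)})|$; since each summand depends on a single variable and the $\mu_z$ are probability measures, the double integral collapses to $\tfrac{2}{|\Z|}\int_\Z\int_{\Sigma_\vartheta(z)} |p(x,y)-p_L(\xi(x),y)|/\pi(y)\, d\mu_z(y)\,dz$. The key simplification is then the coarea identity $\int_\Z\int_{\Sigma_\vartheta(z)} g(y)/\pi(y)\, d\mu_z(y)\, dz = \int_\X g(y)\, dy$, which follows from the defining change of variables for $\mu_z$ together with the coarea formula and which eliminates the role of the candidate coordinate $\vartheta$. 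Integrating the pointwise bound against $\mu$ and recognising the $\mathbb{K}$-norm, the lumpability hypothesis gives at once $\|\delta\|_{L^1_\mu} \le \tfrac{2}{|\Z|}\|p(\ast,\cdot)-p_L(\xi(\ast),\cdot)\|_{\mathbb{K}} \le 2\varepsilon$.

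For the variances I would use that $f$ and $f_L\circ\xi$ are nonnegative, so their $\mu$-means coincide with their $L^1_\mu$-norms, and split
\[
\var_\mu(f) - \var_\mu(f_L\circ\xi) = \mathbb{E}_\mu\bigl[\delta\,(f+f_L\circ\xi)\bigr] - \mathbb{E}_\mu[\delta]\,\bigl(\mathbb{E}_\mu[f]+\mathbb{E}_\mu[f_L\circ\xi]\bigr),
\]
which is just $a^2-b^2=(a-b)(a+b)$ applied to the second moments and to the squared means separately. The squared-mean term is controlled by $|\mathbb{E}_\mu[\delta]|\le\|\delta\|_{L^1_\mu}$ together with $\mathbb{E}_\mu[f]\le\mathbb{E}_\mu[f_L\circ\xi]+2\varepsilon$, and contributes the ``$1$'' part of the factor $(1+C^2)$, i.e.\ a term of order $\|f_L\circ\xi\|_{L^1_\mu}\,\varepsilon$, with the $\varepsilon^2$ cross term absorbed into the remainder.

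The second-moment term $\mathbb{E}_\mu[\delta\,(f+f_L\circ\xi)]$ is where the geometric constants enter. Since $\delta$ is small only in $L^1_\mu$, I would pair it against the bounded factor $f+f_L\circ\xi$, the essential boundedness of both integrands being read off from the same change of variables together with $\int_\X p(x,y)\,dy=1$ and the normalisation of $p_L$. Because $f$ averages a difference of two fractions over two copies of a level set, this uniform comparison enters squared, which I expect to be the origin of $C^2$; writing $f_L\circ\xi$ as a function of $\xi$ and disintegrating $\mu$ along the level sets of $\xi$ then pairs the $L^1_\mu$-smallness of $\delta$ with the effective invariant density, producing the product $\|f_L\circ\xi\|_{L^1_\mu}\,\|\bar\pi\|_\infty$. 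Collecting the squared-mean and second-moment contributions yields the coefficient $2(1+C^2)\|f_L\circ\xi\|_{L^1_\mu}\|\bar\pi\|_\infty$ in front of $\varepsilon$, with all products of two small factors gathered into $\mathcal{O}(\varepsilon^2)$.

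The main obstacle, I expect, is not the $L^1_\mu$-estimate, which is clean once the coarea identity is in hand, but the passage from $L^1_\mu$-smallness of $\delta$ to an $\mathcal{O}(\varepsilon)$ bound on the second-moment term: $\delta$ is not small in $L^\infty$, so the argument must rest on a genuine uniform bound for $f+f_L\circ\xi$ and cannot merely invoke Cauchy--Schwarz on the centered covariance (which would only give $\mathcal{O}(\sqrt\varepsilon)$). The delicacy is that $f$ is defined by averaging over the candidate level sets $\Sigma_\vartheta(\cdot)$, whereas both the lumpability hypothesis and the disintegration of $\mu$ live on the $\xi$-level sets; reconciling these two foliations and correctly tracking the effective invariant density $\bar\pi$ and the comparison constant $C$ that they produce is where the real work lies.
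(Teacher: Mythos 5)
Your proposal follows the paper's proof almost step for step. The $L^1_\mu$-estimate on $f-f_L\circ\xi$ is exactly the paper's key lemma: reverse triangle inequality on the integrands, collapse of one of the two level-set integrals, the coarea formula to turn $\int_\Z\int_{\Sigma_\vartheta(z)}$ back into $\int_\X$, and then the $\mathbb{K}$-norm and the lumpability hypothesis. (The paper works with the unnormalized convention for $\mu_z$, which is where the factor $\|\bar{\pi}\|_\infty$ in the final bound actually enters; your cleaner bound $\|\delta\|_{L^1_\mu}\le 2\varepsilon$ corresponds to the normalized convention, so the discrepancy is bookkeeping, not substance.) The variance split via $a^2-b^2$ applied separately to the second moments and the squared means is also identical to the paper's decomposition into $(\star)$ and $(\star\star)$.

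The one place you genuinely diverge is the second-moment term, and there your route is the more defensible one. The paper bounds $\|(f-f_L\circ\xi)(f+f_L\circ\xi)\|_{L^1_\mu}$ by Cauchy--Schwarz and then replaces the resulting $L^2_\mu$ norms by $L^1_\mu$ norms, citing ``$L^2_\mu\subset L^1_\mu$ since $\mu$ is finite'' to produce the constant $C$. That embedding runs in the wrong direction --- it controls $\|\cdot\|_{L^1_\mu}$ by $\|\cdot\|_{L^2_\mu}$, not conversely --- so the paper's $C$ is really a placeholder for an unstated boundedness assumption. Your $L^1\times L^\infty$ pairing of $\delta$ against $f+f_L\circ\xi$ is the correct way to get an $\mathcal{O}(\varepsilon)$ bound from $L^1$-smallness alone, at the price of having to verify that $f$ and $f_L\circ\xi$ are essentially bounded (which requires $p(x,y)/\pi(y)$ bounded, not guaranteed by the stated hypotheses, and which you should state explicitly rather than leave implicit in ``read off from the change of variables''). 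Your closing remark that the difficulty lies precisely in upgrading $L^1$-smallness to a variance bound is exactly on target --- it identifies the step the paper itself glosses over.
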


In words, the variance of $f$ is $\varepsilon$-close to the variance of $f_L\circ \xi$.
Also, the variance of $f_L\circ \xi$ is equal to the variance of $f_L$:

\begin{lemma}
\label{lem:fL_variance}
	Let $\bar{\mu}$ be defined as in Lemma~\ref{thm:f_variance} and $h\in L^1_{\bar{\mu}}(\Z)$. Then
	$$
	\var_\mu(h\circ \xi) = \var_{\bar{\mu}}(h).
	$$
\end{lemma}

The proof of Theorem~\ref{thm:f_variance} and Lemma~\ref{lem:fL_variance} can be found in Appendix~\ref{sec:proof_f_variance}.

\markchange{
\begin{remark}
To summarize, the variance of $f$ is $\varepsilon$-close to the variance of $f_L$. As $f_L$ is defined on $\Z$, the variance of $f$ cannot depend on the full phase space dimension $n$ through the dimension of its argument.

Looking at the definition of $f_L$, one sees that the variance of the effective transition density $p_L(z,y)$ in its first argument influences $\operatorname{Var}_{\bar{\mu}}(f_L)$.
Note, however, that the dimension $n$ also indirectly appears in the definition of $f_L$, through the integrations over the $(n-r)$-dimensional level sets $\Sigma_\vartheta(z)$.
As the candidate RC $\vartheta$ can be arbitrarily complex, it is very hard to give any general estimates on the influence of its level sets and $n$ on $\var_{\bar{\mu}}(f_L)$. However, we found no argument why $\var_{\bar{\mu}}(f_L)$ should \emph{increase} with increasing $n$ in general. Indeed, in Section~\ref{sec:example_brownian_motion}, we present an example system for which $\var_{\bar{\mu}}(f_L)$ is inversely correlated to $n$. There, we also numerically confirm the predicted dependence of the Monte Carlo error on $\var_\mu(f)$, and the  predicted dependence of $\var_\mu(f)$ on $\varepsilon$.
\end{remark}
}

\subsection{Further numerical challenges}
\label{sec:further_numerical}

In order to solve the optimization problem~\eqref{eq:LRD_optimization_problem} in practice, several more numerical challenges will need to be overcome. These challenges do however not involve the collection of dynamical data, i.e., the expensive realization of the dynamical system, and hence can be considered peripheral.
We will therefore only briefly sketch possible solutions to these challenges, and leave the exact elaboration to future work.

\subsubsection*{Sampling the invariant measure}
In high dimensions, generating the $\mu$-distributend samples $x^{(k)}$ in~\eqref{eq:MCintegral} by classical techniques such as importance sampling is notoriously inefficient~\cite{au_important_2003}. As an alternative, Markov Chain Monte Carlo techniques~\cite{robert_monte_1999}, such as Metropolis-Hastings or Gibbs Sampling can be used. Due to their Markov chain fulfilling the detailed balance condition w.r.t. $\pi$, their generated samples are precisely $\mu$-distributed, so the variance estimates derived in Section~\ref{sec:montecarlo} hold. The same is true for Boltzmann generators~\cite{noe_boltzmann_2019}, which in addition overcome the problem of sampling long-lived states.

\subsubsection*{Discretization of the solution space}

The solution space $S(\X,\Z)$ for $\xi_D$ in~\eqref{eq:LRL_optimization_problem} needs to be replaced by a finite-dimensional parametric ansatz space. While in principle classical approaches like Galerkin or grid-based finite element discretization methods could be explored, these methods suffer heavily from the curse of dimensionality, that is, the exponential dependence of the number of parameters on the system dimension~\cite{novak_approximation_2009}.

\markchange{
Due to its prevalence in scientific computing, many approaches have been suggested to tackle the curse of dimensionality, including sparse grids~\cite{griebel_sparse_1999}, mesh-free methods~\cite{garg_meshfree_2018}, and non-parametric approaches~\cite{scholkopf_learning_2001}. All of these methods are in principle compatible with the task of evaluating $\LRD(\vartheta)$, provided the level sets $\Sigma_\vartheta(z)$ can be sampled for the discretized $\vartheta$.

However, one particular discretization method practically suggests itself for our overall task of finding the minimizer of $\LRD$, and that is the representation of~$\xi_D$ via a multilayer neural network~\cite{lecun_deep_2015}.} These models have demonstrated impressive performance for problems of high input dimensions, such as image recognition~\cite{krizhevsky_imagenet_2017}, protein structure prediction~\cite{senior_improved_2020} or, quite relevant to our task, Markov model construction via the approximation of transfer operator eigenfunctions~\cite{mardt_vampnets_2018}. Minimization of the loss function $\LRD$ can then be accomplished by gradient descent methods, of which there exist numerous efficient variants and implementations~\cite{ruder_overview_2017}.

\subsubsection*{Level set integrals}

Besides the integral over $\X$, computation of $\LRD(\vartheta)$ requires the numerical solution of an integral over $\Z$ and, for each $z\in\Z$, two surface integrals over the $z$-level set $\Sigma_\vartheta(z)$ of $\vartheta$, see~\eqref{eq:LVD_expanded2}. As the dimension $r$ of $\Z$ was assumed to be small, classical grid-based quadrature methods are sufficient for the former. The level sets $\Sigma_\vartheta(z)$, however, are $n-r$ dimensional nonlinear sub-manifolds of $\X$, and thus too high-dimensional for grid based methods.

The canonical approach would be again to use Monte Carlo quadrature, like for the integral over $\X$. This approach was used for the low-dimensional examples presented in Section~\ref{sec:examples}. Sampling of the surface measure $\mu_z$ can be accomplished by restricted simulation of the dynamics, for which numerical schemes were suggested in~\cite{ciccotti_projection_2007}. Moreover, if $\vartheta$ is modeled by a neural network, sampling $\Sigma_\vartheta(z)$ could be alternated with the optimization step, leading to an optimization scheme similar to stochastic gradient descent.

\section{Numerical examples}
\label{sec:examples}

\subsection{Timescale-separated Brownian motion}
\label{sec:example_brownian_motion}

We consider a process $(X_t) = (X_t^{(1)}, \dots, X_t^{(n)})$ on the $n$-dimensional
torus $\mathbb{T}^n := [-\pi,\pi]^n$ for $n \geq 2$
with slow diffusion in the first coordinate direction (standard Brownian motion)
and instantaneous and pairwise independent equilibrations to the uniform distribution
in the $n-1$ remaining coordinate components. We can write this system in SDE form as
\begin{gather*}
	\mathrm{d} X_t^{(1)} = \mathrm{d}W_t, \\
	X_t^{(i)} \sim U([-\pi, \pi]),  \quad  2 \leq i \leq n,
\end{gather*}
such that
\begin{gather*}
	X_{t_1}^{(i)} \perp X_{t_2}^{(i)}, \quad 1 \leq i \leq n,  t_1 \neq t_2, \\
	X_{t_1}^{(i)} \perp X_{t_2}^{(j)} \quad 1 \leq i \neq  j \leq n, \forall t_1, t_2,
\end{gather*} where $W_t$ denotes a standard Brownian motion, $U$ denotes the uniform distribution, and $\perp$ marks independence.
We introduce a second process
$(\widetilde{X}_t)$ on $\mathbb{T}^n$ which is
parametrized by a variance $\sigma^2 >0$ by the global diffusion
\begin{align*}
	\mathrm{d} \widetilde{X}_t^{(1)} &= \mathrm{d} \widetilde{W}_t^{(1)} \\
	\mathrm{d} \widetilde{X}_t^{(i)} &= \sigma \mathrm{d} \widetilde{W}_t^{(i)}, \quad 2 \leq i \leq n,
\end{align*}
where the $\widetilde{W}_t^{(i)}, 1 \leq i \leq n$ are pairwise independent standard
Brownian motions.

\subsubsection{Transition densities}
The transition densities of $(X_t)$ and $(\widetilde{X}_t)$
can be conveniently described in terms of the one-dimensional
\emph{wrapped normal distribution} \cite{jammalamadaka_topics_2001}
on the circle $[-\pi, \pi]$ with
variance $\sigma^2 \geq 0$, which is
defined by the density $g^\sigma: [-\pi, \pi] \times [-\pi, \pi] \rightarrow \R_+$
given by
\begin{equation*}
	g^\sigma(x,y) :=
	\frac{1}{2 \pi}
	\left( 1 + 2 \sum_{k=1}^\infty \rho^{k^2}
	\cos( k (y-x) ) \right), \textnormal{ where } \rho = \exp(- \sigma^2/2).
\end{equation*}
In particular, for $ x=(x_1, \dots, x_n),  y=(y_1, \dots, y_n) \in \mathbb{T}^n$
we straightforwardly obtain the transition density of $(X_t)$ as
\begin{equation*}
	p^{\tau, \infty}(x,y) := \frac{1}{(2\pi)^{n-1}} \; g^\tau(x_1, y_1)
\end{equation*} as well as the transition density of $(\widetilde{X}_t)$ as
\begin{equation}
\label{eq:transition_density_sigma}
	p^{\tau, \sigma}(x,y) := g^\tau(x^{(1)},y^{(1)}) \, \prod_{i = 2}^{n} g^{\tau\sigma}(x_i, y_i).
\end{equation}
The transition densities are illustrated in Figure~\ref{fig:transitiondensities} for
the case $n=2$.

\begin{figure*}[h]
	\centering
	\includegraphics[scale=1]{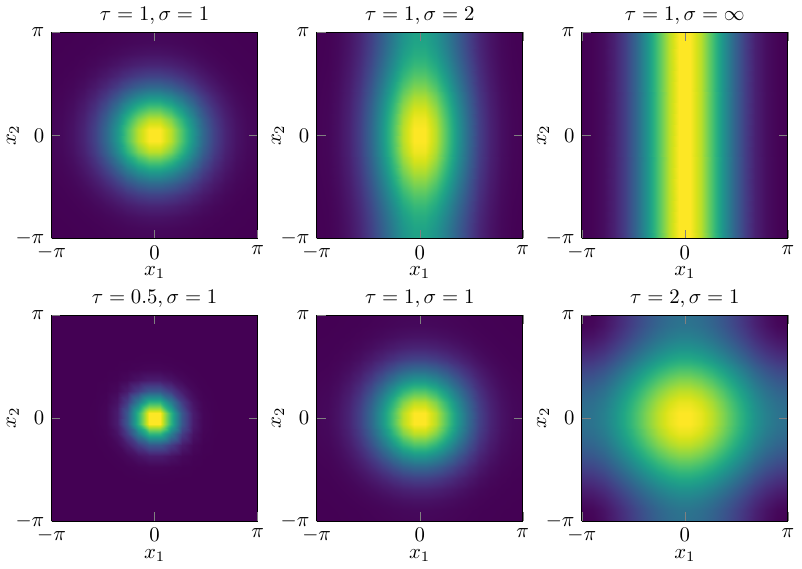}
	\caption{Illustration of transition densities $p^{\tau, \sigma}(0,\cdot)$
		for different lag times $\tau$ and standard deviations $\sigma$ in the case $n=2$.
		The top row illustrates how the transition densities
		$p^{\tau, \sigma}(x, \cdot)$ approximate $p^{\tau, \infty}(x, \cdot)$
		for an increasing standard deviation $\sigma$ when $\tau$ is fixed. Note that
		for fixed $x_1$ coordinates, the limiting density
		$p^{\tau, \infty}(x, \cdot)$ is constant along the $x_2$-coordinate.
		The bottom row illustrates the larger degree of global dispersion for inceasing lag times
		when $\sigma$ is fixed.}
	\label{fig:transitiondensities}
\end{figure*}

\subsubsection{Lumpability}
\label{sec:slowfast_lumpability}
As $\sigma$ increases, the transition density $p^{\tau, \sigma}(x,y) $ uniformly
approximates $p^{\tau, \infty}(x,y) $, we may therefore
understand $(X_t)$ as the limiting process of $(\widetilde{X}_t)$ for large variances.
Because the transition density of $p^{\tau, \infty}(x,y) $ only depends on the first coordinate components $x_1, y_1$,
we can verify lumpability of $(\widetilde{X}_t)$ for large enough $\sigma$, as we will briefly illustrate now.

Consider the RC
$\xi: [- \pi, \pi]^n \rightarrow [- \pi, \pi]$
given by the orthogonal
projection onto the first coordinate component
$\xi((x_1,x_2, \dots, x_n) = x_1$. We call $\xi$ the \emph{optimal} RC. With the effective transition density
\begin{equation*}
	p_L^\tau(z, y) :=  \frac{1}{(2\pi)^{n-1}} \; g^\tau(z, y_1),
\end{equation*}
one can then show that
\begin{equation}
		\label{eq:example_lumpability}
		\big\| p^{\tau, \sigma}(\ast, \cdot)
		- p_L^\tau( \xi(\ast), \cdot)  \big\|_\mathbb{K}
	 = \mathcal{O}
	 \left( \exp(- (\tau \sigma)^2/2)  \right)
\end{equation}
for both $\sigma\to \infty$ or $\tau\to \infty$. The derivation of~\eqref{eq:example_lumpability} can be found in Appendix \ref{sec:lumpability_brownian_motion}.
In particular, for any lag time $\tau>0$, we will find a $\sigma$ for which the system becomes arbitrarily lumpable, while in the limit $\sigma\rightarrow \infty$, the system is ``perfectly'' lumpable (i.e., $\big\| p^{\tau, \sigma}(\ast, \cdot)
		- p_L^\tau( \xi(\ast), \cdot)  \big\|_\mathbb{K} = 0$) for every $\tau>0$.
Note that $\varepsilon$-lumpability of $(\tilde{X}_t)$ also implies $\varepsilon$-decomposability due to Proposition~\ref{prop:equivalence_reversible_epsilon}.

As our estimates of the distance ~$\big\| p^{\tau, \sigma}(\ast, \cdot)
		- p_L^\tau( \xi(\ast), \cdot)  \big\|_\mathbb{K}$ in Appendix \ref{sec:lumpability_brownian_motion} are asymptotic in nature, we can give no precise formula for the dependence of the minimal $\varepsilon$ in~\eqref{eq:lumpability} on $\sigma$. \markchange{In particular, we cannot predict the dependence of $\varepsilon$ on the system dimension $n$. Nevertheless, when computing the distance numerically, its dependence on $n$ appears to be moderate, and seems to diminish with growing $n$ (Figure~\ref{fig:lumpability_error}). Hence, we can expect a high degree of lumpability for moderately large $\sigma$ and moderately high dimensions.}
		Also, the computations confirm the convergence rate predicted in~\eqref{eq:example_lumpability}.

\begin{figure*}[h]
	\centering
	\includegraphics[scale=1]{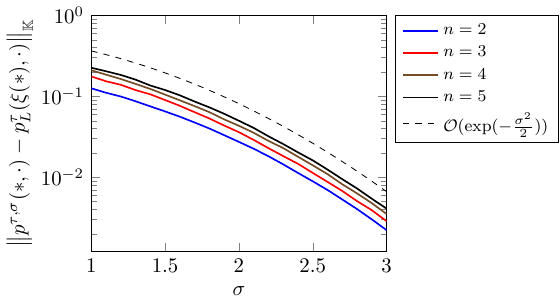}
	\caption{$\mathbb{K}$-distance of the transition density $p^{\tau,\sigma}$ to the effective transition density $p_L^\tau$, indicating the degree of lumpability. We observe the predicted decay in the diffusion constant $\sigma$, as well as a relative insensitivity to the system dimension $n$.}
	\label{fig:lumpability_error}
\end{figure*}

\subsubsection{Loss function illustration}

Next, we investigate the dependence of the loss function $\LRD$ on the RC, for the two-dimensional process (one slow and one fast component). To be precise, we investigate the dependence of $\LRD(\vartheta_\alpha)$ on the parameter $\alpha$ of the family of ``test'' reaction coordiantes
\begin{equation}
\label{eq:xialpha}
\vartheta_\alpha(x) := \frac{1}{\pi (\cos|\alpha| +\sin |\alpha|)} \begin{pmatrix}
	\cos \alpha & \sin \alpha
\end{pmatrix}
\cdot
\begin{pmatrix}
	x_1 \\ x_2
\end{pmatrix},\quad \alpha\in(-\pi,\pi).
\end{equation}
for different values of the diffusion constant $\sigma$. The level sets of $\vartheta_\alpha$ are one-dimensional hyperplanes that intersect the $x_1$-axis with angle $\pi/2-\alpha$. The prefactor in~\eqref{eq:xialpha} ensures that $\mathbb{Z}=\operatorname{range}(\vartheta_\alpha)=[-1,1]$ for all~$\alpha$. Figure~\ref{fig:ansatzRC} illustrates $\vartheta_\alpha$ and its level sets.
\begin{figure*}[h]
	\centering
	\includegraphics[scale=1]{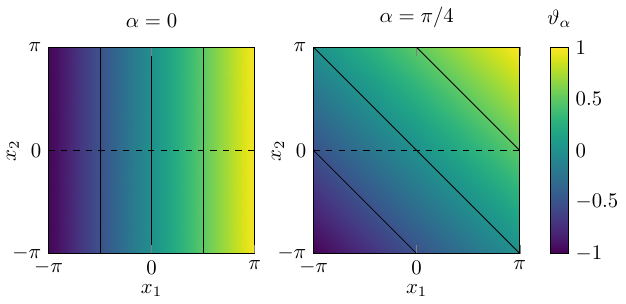}
	\caption{The linear RC $\vartheta_\alpha$ for two different values of $\alpha$.}
	\label{fig:ansatzRC}
\end{figure*}

We use a combination of symbolic computation and numerical quadrature techniques for computing $\LRD(\vartheta_\alpha)$, using Mathematica and Matlab.
In particular, for sampling points on the level sets~$\Sigma_{\vartheta_\alpha}(z)$, required for numerically computing the two innermost integrals in~\eqref{eq:LVD_expanded2}, we utilized the Matlab function \verb+randFixedLinearCombination+, written by John D'Errico and provided through MatlabCentral~\cite{derrico_randfixedlinearcombination_2021}. The scripts containing our computations are provided in the SI.
We do however \emph{not} suggest to use these scripts for the computation and minimization of $\LRD$ in real-world systems, as in particular the integration over the $(n-r)$-dimensional level sets quickly become infeasible. See the discussion in Section~\ref{sec:further_numerical} for a first outlook on how we plan to solve the optimization problem in practice.

The differential decomposability loss function $\LRD(\vartheta_\alpha)$ in dependence of $\alpha$ is shown in Figure~\ref{fig:lossfun}. We observe that, for $\sigma=2$ and $\sigma=\infty$, $\LRD(\vartheta_\alpha)$ indeed takes its unique global minimum for $\alpha=0$, i.e., the optimal RC $\vartheta_0(x) = \xi(x) = x_1$. We also observe that for $\sigma=2$, the minimum value is not zero, as the system is not ``perfectly lumpable'' with respect to $\vartheta_0$, whereas for $\sigma=\infty$ it is. Further, the ``worst'' RCs $\vartheta_{\pm\pi/2}(x)= \pm x_2$, which project onto the system's fast instead of its slow coordinate, correctly get assigned the global maximum value.
For $\alpha=1$, on the other hand, the diffusion is isotropic, and hence the ``slow directions'' coincide with the directions of largest extent, which are the diagonals of the domain $[-\pi,\pi]^2$. Consequently, $\LRD(\vartheta_\alpha)$ becomes minimal for $\alpha=\pm \pi/4$, for which $\vartheta_\alpha$ projects onto the diagonals $\{x_1 =\pm x_2\}$.

 The behavior of $\LRD$ is therefore perfectly consistent with our intuition, and we can expect to identify the optimal RC by minimizing $\LRD$.

\begin{figure*}[t]
	\centering
	\includegraphics[scale=1]{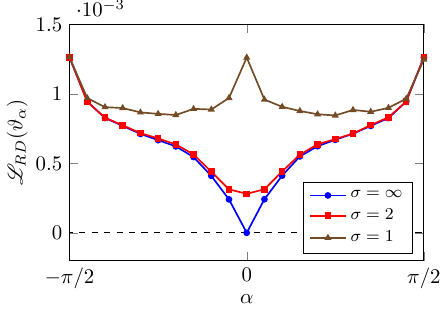}
	\caption{The differential decomposability loss function of the RC $\vartheta_\alpha$ for $\alpha\in(-\pi/2,\pi/2)$ and different values of $\sigma$.}
	\label{fig:lossfun}
\end{figure*}

\subsubsection{Loss function Monte Carlo error}

Finally we investigate the Monte Carlo quadrature error for the integral over $\X$ in $\LRD$.
As discussed in Section~\ref{sec:montecarlo}, this error is, besides the number of sample points $M$, determined by the variance of the integrand $f$, which for the current system takes the form
$$
f(x) =	 \frac{(2\pi)^n}{|\vartheta_\text{max}-\vartheta_\text{min}|}\int_{\vartheta_\text{min}}^{\vartheta_\text{max}} \int_{\Sigma_\vartheta(z)} \int_{\Sigma_\vartheta(z)} \left| p^{\tau,\sigma}(x,y^{(1)}) - p^{\tau,\sigma}(x,y^{(2)})\right|\ts d\sigma_z(y^{(1)})\ts d\sigma_z(y^{(2)})\ts dz,
$$
where $\vartheta_\text{max}$ and $\vartheta_\text{min}$ describe the maximum and minimum of $\vartheta$.
We point out in particular that $f$ depends on the diffusion coefficient $\sigma$ and, through the RC $\vartheta$, on the system dimension $n$. The influence of these two parameters on the Monte Carlo error is the primary subject of investigation for this section.

We consider in dimension $2$ and $3$, respectively, the test RCs
$$
\vartheta^2(x) = x_1+x_2,\qquad \vartheta^3(x) = x_1+x_2+x_3.
$$
Figure~\ref{fig:integrand_sigma} shows $f$ for the RC $\vartheta^2$ and different values of $\sigma$. We observe that with increasing $\sigma$, $f$ indeed becomes increasingly constant on the level sets $\{x_1 = z\}$ of the optimal RC $\xi$. This behavior was predicted in Section~\ref{sec:montecarlo}.

\begin{figure*}[t]
	\centering
	\includegraphics[scale=1]{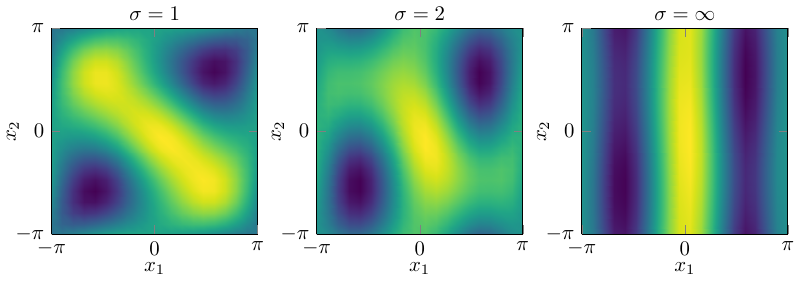}
	\caption{The function $f$, i.e., the integrand of the integral over $\X$ in $\LRD(\vartheta)$ for $\vartheta(x)=x_1+x_2$ and different values of $\sigma$.}
	\label{fig:integrand_sigma}
\end{figure*}

Figure~\ref{fig:variance_sigma} illustrates the variance of $f$. We first note that the variance indeed decreases for increasing $\sigma$, and converges towards the variance of the process with instantaneously equilibrating components $x_2,\ldots,x_n$ (equivalent to choosing ``$\sigma=\infty$''). Moreover, we observe that the variance of the three-dimensional process is substantially smaller compared to the two-dimensional process. This observation still holds when considering the relative variance $\var[f]/\mathbb{E}[f]$. This demonstrates that higher-dimensional function do not per se possess higher variance. In fact, $\var[f]$ appears to be more dependent on the choice of the particular RC $\vartheta$ than on $n$, although, to avoid digression, we will refrain from detailed analysis.

\begin{figure*}[t]
	\centering
	\includegraphics[scale=1]{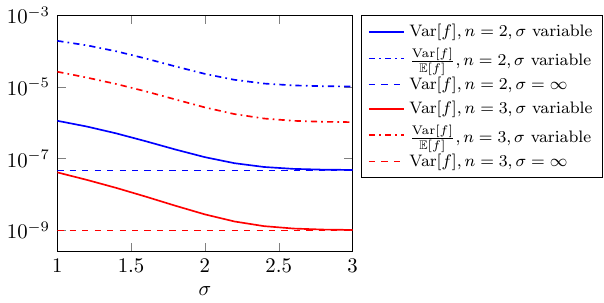}
	\caption{(Relative) variance of $f$ associated in dimensions 2 and 3 for various $\sigma$. We observe convergence of $\var[f]$ with variable $\sigma$ towards $\var[f]$ associated with the $\sigma$-independent limit process $X_t$.}
	\label{fig:variance_sigma}
\end{figure*}

Finally, we estimate the relative expected MC error
$$
\frac{\mathbb{E}\left[ \left| I(f) - I_M(f)\right| \right]}{I(f)}
$$
where $I(f)$ and $I_M(f)$ are the exact integral and Monte Carlo integral with $M$ samples of $f$ defined in~\eqref{eq:MCintegral_exact} and~\eqref{eq:MCintegral}.
In practice, this error indicates how many ``dynamical samples'' $p^{\tau,\sigma}(x^{(k)},\cdot)$ need to be created by numerical simulation in order to approximate $\LRD(\vartheta)$ up to a given accuracy. In the present example, however, $p^{\tau,\sigma}$ is known analytically by~\eqref{eq:transition_density_sigma}.

Figure~\ref{fig:MCerror} shows the relative error in dependence on the sample size $M$ for $\vartheta^2$ and~$\vartheta^2$, each for a finite value of $\sigma$ as well as $\sigma=\infty$.
In all four cases we observe the expected Monte Carlo convergence rate of $\mathcal{O}(1/\sqrt{M})$.
Moreover, the error is significantly smaller for higher $\sigma$, i.e., the more lumpable system.
Finally, the expected error for the higher dimensional system is slightly smaller.
All these phenomena are perfectly consistent with the preceding analysis of~$\var[f]$.

\begin{figure*}[t]
	\centering
	\includegraphics[scale=1]{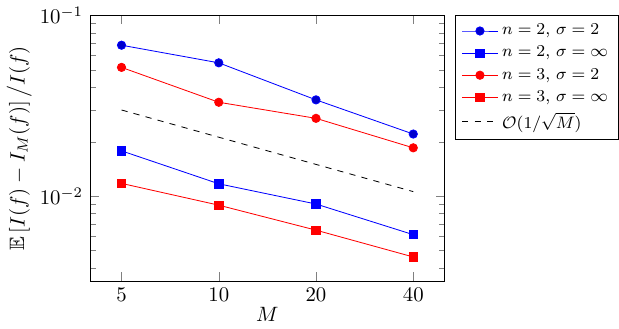}
	\caption{The relative Monte Carlo quadrature error for the integral over $\X$ in $\LRD(\vartheta_w)$ for different dimensions $n$ and different diffusion coefficients $\sigma$.}
	\label{fig:MCerror}
\end{figure*}

We conclude from this example that for lumpable systems, we can expect to find the optimal RC by minimising $\LRD$, that the Monte Carlo approximation to $\LRD(\vartheta)$ requires only few dynamical samples for highly lumpable systems, and that a high dimension of the base system has no negative impact on the performance.

\subsection{Metastable circular system}
\label{sec:example_metastable_circular}

To demonstrate the behavior of the loss function for nonlinear optimal RCs, we consider as a second example a two-dimensional system governed by the overdamped Langevin dynamics
\begin{equation}
	\label{eq:overdamped_langevin}
	dX_t = -\nabla V(X_t) + \sqrt{2\beta^{-1}} dW_t,
\end{equation}
where $V:\mathbb{R}^2\rightarrow \mathbb{R}$ is the potential energy surface and $\beta >0$ is the inverse temperature determining the strength of the Brownian motion $W_t$.
Informally, movement of this system can be described as a random walk within the energy landscape, aiming in the direction of steepest descent of $V$ but being
disturbed by temperature-scaled white noise.

In particular, we here consider the family of potentials
$$
V_\sigma(x) = \cos\left(5 \varphi(x)\right) + \sigma \left(r(x)-1\right)^2,\quad \sigma>0,
$$
where $\left(\varphi(x), r(x)\right)$ describe the polar coordinates of $x$, i.e.,
$$
\varphi(x)=\operatorname{atan2}(x),\qquad r(x)=\|x\|_2.
$$
The potential consists of two components: a cosine term in the angular coordinate with five local minima of equal depth, and a quadratic term in the radial coordinate with a single minimum at $r=1$.
The full potential, shown in Figure~\ref{fig:lemonslice_potential}, therefore possesses five local minima arranged along the unit circle. The two components $\varphi$ and $r$ as functions on $\X$ are shown in Figure~\ref{fig:lemonslice_lossfun}~(left).

\begin{figure*}
	\centering
	\includegraphics[scale=1]{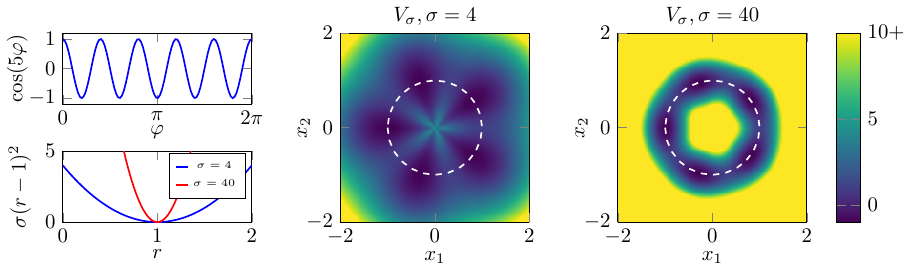}
	\caption{Left: The two components of the potential, which depend on the angular and the radial part of the polar coordinates, respectively. Center \& right: The circular potential and its components for different values of $\sigma$. The dashed white line indicates the unit circle, i.e., the minimal energy pathway connecting the local minima.}
	\label{fig:lemonslice_potential}
\end{figure*}

\subsubsection{Metastability analysis}

At moderate temperatures, the five local minima of $V$ induce \emph{metastability} in the system. This means that a typical trajectory will vibrate around a minimum for a long time, until suddenly, induced by sufficiently strong stochastic excitation, undergo a rapid transition to another local minimum.
If additionally the equilibration in the radial direction is sufficiently fast, i.e., the parameter $\sigma$ is sufficiently large, then these rare transitions will be the slowest sub-processes of the system. To confirm this, we compute the leading eigenvalues and associated eigenfunctions of the system's transfer operator $\mathcal{P}^t:L^2(\X)\rightarrow L^2(\X)$, which is defined by
$$
\mathcal{P}^tf(x) = \int_\X f(y) p^t(y,x)\ts dy.
$$
Going back to the late 90s, spectral analysis of the transfer operator, or its adjoint, the Koopman operator, forms the basis of many model reduction methods that aim to preserve the system's long timescales, for an overview see~\cite{klus_data-driven_2018}.

The eigenvalues of $\mathcal{P}^t$ for $t=0.1$ and various values of $\sigma$ are shown in Figure~\ref{fig:lemonslice_eigenvalues}. We see that for $\sigma=10$ and $\sigma=100$, there is a significant \emph{spectral gap} after $\lambda_5$, indicating a significant time scale separation between the associated sub-processes. Analysing the sign structure of the associated eigenfunctions  confirms that the slowest processes are indeed associated with the transitions between the metastable sets (see the SI).

For small $\sigma$, on the other hand, no spectral gap can be observed, hence the metastable transitions are not considerably slower than the remaining processes. Indeed, sign analysis of the sixth eigenfunction for $\sigma=1$ shows that the corresponding sub-process describes the equilibration in the radial direction (see the SI). As $\lambda_6$ is not significantly separated from $\lambda_5$, the radial equilibration occurs on roughly the same timescale as the metastable transitions.

\begin{figure*}
	\centering
	\includegraphics[scale=1]{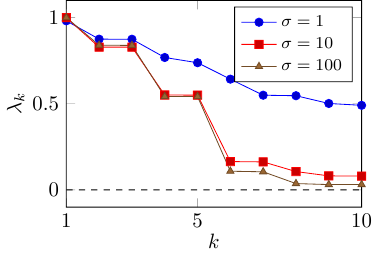}
	\caption{Leading eigenvalues of the circular system's transfer operator for different values of $\sigma$. For larger $\sigma$, we observe a spectral gap, which indicates that the equilibration times of the metastable transitions are much slower than that of other sub-processes in the system.}
	\label{fig:lemonslice_eigenvalues}
\end{figure*}

\subsubsection{Lossfunction computation}

The preceding analysis confirms that, for high enough $\sigma$, a good RC should resolve the transitions between the metastable sets. As these transitions occur predominantly within certain ``transition channels'' that surround the minimum energy pathways (MEPs)~\cite{metzner_illustration_2006}, and as in our system the MEPs between two neighbouring minima are segments of the unit circle, we would expect the angular component $\varphi(x)$ of the polar coordinates of $x$ to be a good RC. Conversely, we would expect the radial component $r(x)$ to be a bad RC, especially for high values of $\sigma$.

To test this hypothesis, we now compute the differential lumpability loss function $\LRD$ for both~$\varphi$ and~$r$.
For the lag time parameter in $\LRD$, we use $\tau=0.1$, as the observed spectral gap for this time indicates that the time scale separation between the slow and fast processes has already manifested itself.
We again solve the various integrals in~\eqref{eq:LVD_expanded2} by Monte Carlo quadrature.
One complication over the simple slow-fast system is that the transition density functions $p^\tau(x,\cdot)$ are not known analytically, so they have to be approximated empirically. For that, we first draw an empirical sample of $p^t(x,\cdot)$ by simulating many trajectories with starting point $x$, and then apply the kernel density estimation algorithm to the end points.
For details on the numerical implementation see the SI.

Figure~\ref{fig:lemonslice_lossfun}~(right) shows the loss function for the two RCs in dependendce of~$\sigma$. We see that, indeed, $\LRD(\varphi) < \LRD(r)$ for all values of $\sigma$, hence $\varphi$ is the better RC. Moreover, for increasing $\sigma$, $\LRD(\varphi)$ continually decreases\footnote{Note that the flattening of $\LRD(\varphi)$ towards the right side of the plot is a numerical artefact. For high degrees of lumpability, miniscule differences between transition densities need to be quantified, which presents a challenge to our fixed-bandwidth kernel density estimator.}, whereas $\LRD(r)$ increases. This agrees well with our intuitive understanding of the role of $\sigma$: with increasing $\sigma$, the radial component $r$ equilibrates more quickly, so the long-term future evolution of $X_t$ depends more and more only on $\varphi(X_t)$ (i.e., the degree of lumpability with respect to $\varphi$ increases), and less and less on $r(X_t)$ (i.e., the degree of lumpability with respect to $r$ decreases).

\begin{figure*}
	\centering
	\includegraphics[scale=1]{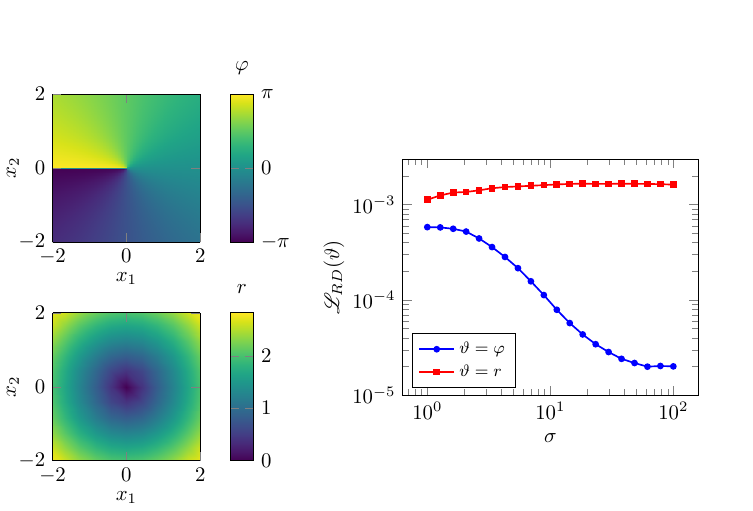}
	\caption{Left: the two components $\varphi$ and $r$ as functions on $\X$. Right: Loss function $\LRD$ for $\varphi$ and $r$ interpreted as RCs.}
	\label{fig:lemonslice_lossfun}
\end{figure*}

\section{Discussion and outlook}
\label{sec:conclusions}

In this paper, we formally characterized optimal reaction coordinates (RCs) in continuous Markovian systems, that is, observables that optimally describe latent long-term mechanisms of the dynamics. We have seen, by both theoretical analysis (Section~\ref{sec:examples_reducible}) and numerical examples (Section~\ref{sec:examples}), that our definition is applicable to several common types of multiscale systems, such as slow-fast systems and metastable systems.
To add further interpretability to our definition, it would be desireable to draw a formal connection to the well-established transition path theory~\cite{e_towards_2006, metzner_illustration_2006}, which characterizes RCs in terms of committor functions and minimum energy pathways on the potential energy surface. In~\cite{bittracher_data-driven_2018}, a connection between the transition manifold approach (a predecessor to the present characterization) and transition path theory has been discussed rather informally, but a rigorous investigation is still outstanding.

We then presented a variational formulation of this characterization as a computational strategy for the discovery of optimal RCs. In particular, that variational formulation provides a leverage point for modern machine learning methods, such as deep learning and stochastic gradient descent.
The implementation of these methods and their demonstration of efficiency are subject of ongoing work. The reduced data requirement for the optimization problem that was demonstrated in this paper raises confidence that we will be able to apply our method to high dimensional problems such as the identification of RCs in large molecular systems.

\section*{Acknowledgements}

This research has been funded by Deutsche Forschungsgemeinschaft (DFG) through grant CRC~1114 ``Scaling Cascades in Complex Systems'', Project Number 235221301, Project B03 ``Multilevel coarse graining of multiscale problems'' and Project A01 ``Coupling a multiscale stochastic precipitation model to large scale atmospheric flow dynamics'', and by Deutsche Forschungsgemeinschaft (DFG) through grant EXC~2046 ``MATH+'', Project Number 390685689, Project AA1-2 ``Learning Transition Manifolds and Effective Dynamics of Biomolecules''.

\bibliographystyle{abbrv}
\bibliography{ContinuousProbabilisticAggregation.bib}

\begin{thebibliography}{10}

\bibitem{au_important_2003}
S.~K. Au and J.~L. Beck.
\newblock Important sampling in high dimensions.
\newblock {\em Structural Safety}, 25(2):139--163, Apr. 2003.

\bibitem{berner_modern_2021}
J.~Berner, P.~Grohs, G.~Kutyniok, and P.~Petersen.
\newblock The {{Modern Mathematics}} of {{Deep Learning}}.
\newblock {\em arXiv:2105.04026 [cs, stat]}, May 2021.

\bibitem{bittracher_data-driven_2018}
A.~Bittracher, R.~Banisch, and C.~Sch{\"u}tte.
\newblock Data-driven computation of molecular reaction coordinates.
\newblock {\em The Journal of Chemical Physics}, 149(15):154103, 2018.

\bibitem{bittracher_dimensionality_2020}
A.~Bittracher, S.~Klus, B.~Hamzi, P.~Koltai, and C.~Sch{\"u}tte.
\newblock Dimensionality {{Reduction}} of {{Complex Metastable Systems}} via
  {{Kernel Embeddings}} of {{Transition Manifolds}}.
\newblock {\em J Nonlinear Sci}, 31(1):3, Dec. 2020.

\bibitem{bittracher_transition_2017}
A.~Bittracher, P.~Koltai, S.~Klus, R.~Banisch, M.~Dellnitz, and C.~Sch{\"u}tte.
\newblock Transition {{Manifolds}} of {{Complex Metastable Systems}}:
  {{Theory}} and {{Data}}-driven {{Computation}} of {{Effective Dynamics}}.
\newblock {\em J. Nonlinear Sci.}, 28(2):471--512, 2017.

\bibitem{bittracher_exploring_2021}
A.~Bittracher, J.~Moschner, B.~Koksch, R.~R. Netz, and C.~Sch{\"u}tte.
\newblock Exploring the {{Locking Stage}} of {{NFGAILS Amyloid Fibrillation}}
  via {{Transition Manifold Analysis}}.
\newblock {\em submitted to EPJ B}, 2021.

\bibitem{bittracher_weak_2020}
A.~Bittracher and C.~Sch{\"u}tte.
\newblock A {{Weak Characterization}} of {{Slow Variables}} in {{Stochastic
  Dynamical Systems}}.
\newblock In O.~Junge, O.~Sch{\"u}tze, G.~Froyland, S.~{Ober-Bl{\"o}baum}, and
  K.~{Padberg-Gehle}, editors, {\em Advances in {{Dynamics}}, {{Optimization}}
  and {{Computation}}}, Studies in {{Systems}}, {{Decision}} and {{Control}},
  pages 132--150, {Cham}, 2020. {Springer International Publishing}.

\bibitem{bittracher_probabilistic_2021}
A.~Bittracher and C.~Sch{\"u}tte.
\newblock A probabilistic algorithm for aggregating vastly undersampled large
  {{Markov}} chains.
\newblock {\em Physica D: Nonlinear Phenomena}, 416:132799, Feb. 2021.

\bibitem{Bovier2}
A.~Bovier, V.~Gayrard, and M.~Klein.
\newblock Metastability in reversible diffusion processes {II}: Precise
  asymptotics for small eigenvalues.
\newblock 7:69-99, 2002.

\bibitem{ciccotti_projection_2007}
G.~Ciccotti, T.~Lelievre, and E.~{Vanden-Eijnden}.
\newblock Projection of {{Diffusions}} on {{Submanifolds}}: {{Application}} to
  {{Mean Force Computation}}.
\newblock {\em Communications on Pure and Applied Mathematics}, 61(3):371--408,
  2007.

\bibitem{coifman_diffusion_2006}
R.~R. Coifman and S.~Lafon.
\newblock Diffusion maps.
\newblock {\em Applied and Computational Harmonic Analysis}, 21(1):5--30, July
  2006.

\bibitem{dellnitz_approximation_1999}
M.~Dellnitz and O.~Junge.
\newblock On the approximation of complicated dynamical behaviour.
\newblock {\em SIAM J. Num. Anal.}, 36(2), 1999.

\bibitem{derrico_randfixedlinearcombination_2021}
J.~D'Errico.
\newblock {{randFixedLinearCombination}}.
\newblock
  https://de.mathworks.com/matlabcentral/fileexchange/49795-randfixedlinearcombination,
  Retrieved June 16, 2021.

\bibitem{deuflhard_identification_2000}
P.~Deuflhard, W.~Huisinga, A.~Fischer, and C.~Sch{\"u}tte.
\newblock Identification of almost invariant aggregates in reversible nearly
  uncoupled {{Markov}} chains.
\newblock {\em Linear Algebra Appl.}, 315(13):39--59, 2000.

\bibitem{deuflhard_robust_2005}
P.~Deuflhard and M.~Weber.
\newblock Robust {{Perron}} cluster analysis in conformation dynamics.
\newblock {\em Linear algebra and its applications}, 398:161--184, 2005.

\bibitem{drineas_nystrom_2005}
P.~Drineas and M.~W. Mahoney.
\newblock On the {{Nystr\"om Method}} for {{Approximating}} a {{Gram Matrix}}
  for {{Improved Kernel}}-{{Based Learning}}.
\newblock {\em J. Mach. Learn. Res.}, 6:2153--2175, Dec. 2005.

\bibitem{e_towards_2006}
W.~E and E.~{Vanden-Eijnden}.
\newblock Towards a {{Theory}} of {{Transition Paths}}.
\newblock {\em J. Stat. Phys.}, 123(3):503--523, 2006.

\bibitem{foucart_mathematical_2013}
S.~Foucart and H.~Rauhut.
\newblock {\em A {{Mathematical Introduction}} to {{Compressive Sensing}}}.
\newblock Applied and {{Numerical Harmonic Analysis}}. {Springer New York},
  {New York, NY}, 2013.

\bibitem{froyland_computational_2014}
G.~Froyland, G.~A. Gottwald, and A.~Hammerlindl.
\newblock A {{Computational Method}} to {{Extract Macroscopic Variables}} and
  {{Their Dynamics}} in {{Multiscale Systems}}.
\newblock {\em SIAM Journal on Applied Dynamical Systems}, 13(4):1816--1846,
  Jan. 2014.

\bibitem{garg_meshfree_2018}
S.~Garg and M.~Pant.
\newblock Meshfree {{Methods}}: {{A Comprehensive Review}} of {{Applications}}.
\newblock {\em Int. J. Comput. Methods}, 15(04):1830001, June 2018.

\bibitem{Bovier1}
V.~Gayrard, A.~Bovier, M.~Eckhoff, and M.~Klein.
\newblock Metastability in reversible diffusion processes {I}: Sharp
  asymptotics for capacities and exit times.
\newblock 2004.

\bibitem{griebel_sparse_1999}
M.~Griebel, P.~Oswald, and T.~Schiekofer.
\newblock Sparse grids for boundary integral equations.
\newblock {\em Numerische Mathematik}, 83(2):279--312, 1999.

\bibitem{halko_finding_2011}
N.~Halko, P.~Martinsson, and J.~Tropp.
\newblock Finding {{Structure}} with {{Randomness}}: {{Probabilistic
  Algorithms}} for {{Constructing Approximate Matrix Decompositions}}.
\newblock {\em SIAM Review}, 53(2):217--288, 2011.

\bibitem{jammalamadaka_topics_2001}
S.~R. Jammalamadaka and A.~SenGupta.
\newblock {\em Topics in {{Circular Statistics}}}, volume~5 of {\em Series on
  {{Multivariate Analysis}}}.
\newblock {WORLD SCIENTIFIC}, Apr. 2001.

\bibitem{kalos_monte_2009}
M.~H. Kalos and P.~A. Whitlock.
\newblock {\em Monte {{Carlo Methods}}}.
\newblock {John Wiley \& Sons}, June 2009.

\bibitem{kemeny_finite_1976}
J.~Kemeny and J.~L. Snell.
\newblock {\em Finite {{Markov Chains}}}.
\newblock {Springer-Verlag New York}, 1976.

\bibitem{khoo_solving_2018}
Y.~Khoo, J.~Lu, and L.~Ying.
\newblock Solving for high-dimensional committor functions using artificial
  neural networks.
\newblock {\em Res Math Sci}, 6(1):1, Oct. 2018.

\bibitem{klus_data-driven_2018}
S.~Klus, F.~N{\"u}ske, P.~Koltai, H.~Wu, I.~Kevrekidis, C.~Sch{\"u}tte, and
  F.~No{\'e}.
\newblock Data-{{Driven Model Reduction}} and {{Transfer Operator
  Approximation}}.
\newblock {\em J Nonlinear Sci}, 28(3):985--1010, June 2018.

\bibitem{klus_data-driven_2020}
S.~Klus, F.~N{\"u}ske, S.~Peitz, J.-H. Niemann, C.~Clementi, and
  C.~Sch{\"u}tte.
\newblock Data-driven approximation of the {{Koopman}} generator: {{Model}}
  reduction, system identification, and control.
\newblock {\em Physica D: Nonlinear Phenomena}, 406:132416, May 2020.

\bibitem{klus_eigendecompositions_2020}
S.~Klus, I.~Schuster, and K.~Muandet.
\newblock Eigendecompositions of {{Transfer Operators}} in {{Reproducing Kernel
  Hilbert Spaces}}.
\newblock {\em Journal of Nonlinear Science}, 30(1):283--315, Feb. 2020.

\bibitem{koltai_multiscale_2018}
P.~Koltai and C.~Sch{\"u}tte.
\newblock A {{Multiscale Perturbation Expansion Approach}} for {{Markov State
  Modeling}} of {{Nonstationary Molecular Dynamics}}.
\newblock {\em Multiscale Model. Simul.}, 16(4):1455--1485, Jan. 2018.

\bibitem{krengel_ergodic_1985}
U.~Krengel and A.~Brunel.
\newblock {\em Ergodic Theorems}.
\newblock Number~6 in De {{Gruyter}} Studies in Mathematics. {W. de Gruyter},
  {Berlin ; New York}, 1985.

\bibitem{krizhevsky_imagenet_2017}
A.~Krizhevsky, I.~Sutskever, and G.~E. Hinton.
\newblock {{ImageNet}} classification with deep convolutional neural networks.
\newblock {\em Commun. ACM}, 60(6):84--90, May 2017.

\bibitem{lecun_deep_2015}
Y.~LeCun, Y.~Bengio, and G.~Hinton.
\newblock Deep learning.
\newblock {\em Nature}, 521(7553):436--444, May 2015.

\bibitem{maragliano_string_2006}
L.~Maragliano, A.~Fischer, E.~{Vanden-Eijnden}, and G.~Ciccotti.
\newblock String method in collective variables: {{Minimum}} free energy paths
  and isocommittor surfaces.
\newblock {\em J. Chem. Phys.}, 125(2):024106, 2006.

\bibitem{mardt_vampnets_2018}
A.~Mardt, L.~Pasquali, H.~Wu, and F.~No{\'e}.
\newblock {{VAMPnets}} for deep learning of molecular kinetics.
\newblock {\em Nat. Commun.}, 9(1):5, 2018.

\bibitem{mcgibbon_identification_2017}
R.~T. McGibbon, B.~E. Husic, and V.~S. Pande.
\newblock Identification of simple reaction coordinates from complex dynamics.
\newblock {\em The Journal of Chemical Physics}, 146(4):044109, 2017.

\bibitem{metzner_illustration_2006}
P.~Metzner, C.~Sch{\"u}tte, and E.~{Vanden-Eijnden}.
\newblock Illustration of transition path theory on a collection of simple
  examples.
\newblock {\em The Journal of Chemical Physics}, 125(8):084110, Aug. 2006.

\bibitem{molgedey_separation_1994}
L.~Molgedey and H.~G. Schuster.
\newblock Separation of a mixture of independent signals using time delayed
  correlations.
\newblock {\em Physical Review Letters}, 72(23):3634--3637, 1994.

\bibitem{noe_boltzmann_2019}
F.~No{\'e}, S.~Olsson, J.~K{\"o}hler, and H.~Wu.
\newblock Boltzmann generators: {{Sampling}} equilibrium states of many-body
  systems with deep learning.
\newblock {\em Science}, 365(6457):eaaw1147, Sept. 2019.

\bibitem{novak_approximation_2009}
E.~Novak and H.~Wo{\'z}niakowski.
\newblock Approximation of infinitely differentiable multivariate functions is
  intractable.
\newblock {\em Journal of Complexity}, 25(4):398--404, Aug. 2009.

\bibitem{nuske_spectral_2021}
F.~N{\"u}ske, P.~Koltai, L.~Boninsegna, and C.~Clementi.
\newblock Spectral {{Properties}} of {{Effective Dynamics}} from {{Conditional
  Expectations}}.
\newblock {\em Entropy}, 23(2):134, Feb. 2021.

\bibitem{Tiwary2020}
S.~Pant, Z.~Smith, Y.~Wang, E.~Tajkhorshid, and P.~Tiwary.
\newblock Confronting pitfalls of ai-augmented molecular dynamics using
  statistical physics.
\newblock {\em The Journal of Chemical Physics}, 153:234118, 12 2020.

\bibitem{pavliotis_multiscale_2008}
G.~A. Pavliotis and A.~M. Stuart.
\newblock {\em Multiscale Methods: Averaging and Homogenization}.
\newblock {Springer Science \& Business Media}, 2008.

\bibitem{perez-hernandez_identification_2013}
G.~{Perez-Hernandez}, F.~Paul, T.~Giorgino, G.~de~Fabritiis, and F.~No{\'e}.
\newblock Identification of slow molecular order parameters for {{Markov}}
  model construction.
\newblock {\em The Journal of Chemical Physics}, 139(1):015102, 2013.

\bibitem{RabbenRayWeber2020}
R.~J. Rabben, S.~Ray, and M.~Weber.
\newblock {ISOKANN}: Invariant subspaces of koopman operators learned by a
  neural network.
\newblock {\em The Journal of Chemical Physics}, 153(11):114109, 2020.

\bibitem{rahimi_random_nodate}
A.~Rahimi and B.~Recht.
\newblock Random {{Features}} for {{Large}}-{{Scale Kernel Machines}}.
\newblock page~8.

\bibitem{robert_monte_1999}
C.~P. Robert and G.~Casella.
\newblock {\em Monte {{Carlo Statistical Methods}}}.
\newblock Springer {{Texts}} in {{Statistics}}. {Springer New York}, {New York,
  NY}, 1999.

\bibitem{roberts_general_2004}
G.~O. Roberts and J.~S. Rosenthal.
\newblock General state space {{Markov}} chains and {{MCMC}} algorithms.
\newblock {\em Probability Surveys}, 1(none):20--71, Jan. 2004.

\bibitem{rosenblatt1967}
M.~Rosenblatt.
\newblock Transition probability operators.
\newblock In {\em Proceedings of the Fifth Berkeley Symposium on Mathematical
  Statistics and Probability, Volume 2: {{Contributions}} to Probability
  Theory, Part 2}, pages 473--483, {Berkeley, Calif.}, 1967. {University of
  California Press}.

\bibitem{ruder_overview_2017}
S.~Ruder.
\newblock An overview of gradient descent optimization algorithms.
\newblock {\em arXiv:1609.04747 [cs]}, June 2017.

\bibitem{sarich_markov_2014}
M.~Sarich, R.~Banisch, C.~Hartmann, and C.~Sch{\"u}tte.
\newblock Markov {{State Models}} for {{Rare Events}} in {{Molecular
  Dynamics}}.
\newblock {\em Entropy}, 16(1):258--286, Jan. 2014.

\bibitem{scholkopf_learning_2001}
B.~Sch{\"o}lkopf and A.~J. Smola.
\newblock {\em Learning with {{Kernels}}: {{Support Vector Machines}},
  {{Regularization}}, {{Optimization}} and {{Beyond}}}.
\newblock {MIT press}, {Cambridge, USA}, 2001.

\bibitem{schutte_direct_1999}
C.~Sch{\"u}tte, A.~Fischer, W.~Huisinga, and P.~Deuflhard.
\newblock A {{Direct Approach}} to {{Conformational Dynamics Based}} on
  {{Hybrid Monte Carlo}}.
\newblock {\em J. Comput. Phys.}, 151(1):146--168, 1999.

\bibitem{schutte_metastability_2014}
C.~Sch{\"u}tte and M.~Sarich.
\newblock {\em Metastability and {{Markov State Models}} in {{Molecular
  Dynamics}}: {{Modeling}}, {{Analysis}}, {{Algorithmic Approaches}}}.
\newblock Courant {{Lecture Notes}} in {{Mathematics}}. {American Mathematical
  Society}, 2014.

\bibitem{senior_improved_2020}
A.~W. Senior, R.~Evans, J.~Jumper, J.~Kirkpatrick, L.~Sifre, T.~Green, C.~Qin,
  A.~{\v Z}{\'i}dek, A.~W.~R. Nelson, A.~Bridgland, H.~Penedones, S.~Petersen,
  K.~Simonyan, S.~Crossan, P.~Kohli, D.~T. Jones, D.~Silver, K.~Kavukcuoglu,
  and D.~Hassabis.
\newblock Improved protein structure prediction using potentials from deep
  learning.
\newblock {\em Nature}, 577(7792):706--710, Jan. 2020.

\bibitem{singer_detecting_2009}
A.~Singer, R.~Erban, I.~G. Kevrekidis, and R.~R. Coifman.
\newblock Detecting intrinsic slow variables in stochastic dynamical systems by
  anisotropic diffusion maps.
\newblock {\em Proceedings of the National Academy of Sciences of the United
  States of America}, 106(38):16090--5, 2009.

\bibitem{udell_why_2019}
M.~Udell and A.~Townsend.
\newblock Why {{Are Big Data Matrices Approximately Low Rank}}?
\newblock {\em SIAM Journal on Mathematics of Data Science}, 1(1):144--160,
  Jan. 2019.

\bibitem{wang_slow_2013}
W.~Wang and A.~Roberts.
\newblock Slow manifold and averaging for slow\textendash fast stochastic
  differential system.
\newblock {\em Journal of Mathematical Analysis and Applications},
  398(2):822--839, Feb. 2013.

\bibitem{wehmeyer_time-lagged_2018}
C.~Wehmeyer and F.~No{\'e}.
\newblock Time-lagged autoencoders: {{Deep}} learning of slow collective
  variables for molecular kinetics.
\newblock {\em J. Chem. Phys.}, 148(24):241703, June 2018.

\bibitem{zwanzig_nonequilibrium_2001}
R.~Zwanzig.
\newblock {\em Nonequilibrium Statistical Mechanics}.
\newblock {Oxford University Press}, 2001.

\end{thebibliography}

\begin{appendix}

\section{Extended multiscale expansion of slow-fast systems}
\label{sec:extended_multiscale}

In this section, we continue the multiscale analysis for the slow-fast system from Section~\ref{sec:slowfast}. That is, we derive an evolution equation for the dominant component $q^t_0$ of the transition density. Comparing the terms of order $\varepsilon^0$ in~\eqref{eq:multiscale_equation_Smolu} yields
\begin{equation}
\label{eq:order_eps0_Smolu}
\partial_t q_0^t(x,\cdot) = \mathcal{L}_y q_1^t(x,\cdot) + \mathcal{L}_z q_0^t(x,\cdot).
\end{equation}
The middle term is zero, which can be seen as follows: Let the averaging operator $\Pi:L^1_\mu(\X)\rightarrow L^1_{\bar{\mu}}(\Z)$ be defined by
\begin{equation}
\label{eq:averaging_operator}
\Pi g (z) = \frac{\int_\Y g(y,z) \pi(y,z)\ts dy}{\bar{\pi}(z)} \quad\text{where}\quad \bar{\pi}(z) = \int_\Y \pi(y,z)\ts dy
\end{equation}
and $\bar{\mu}$ is the measure induced by $\bar{\pi}$.
As $q_0^t(x,\cdot)$ is independent of $y$ in its second argument, we have
$$
\partial_t q_0^t(x,\cdot) = \frac{1}{\bar{\pi}(z)} \Pi \partial_t q_0^t(x,\cdot) \quad \text{and}\quad \mathcal{L}_z q_0^t(x,\cdot)= \frac{1}{\bar{\pi}(z)} \Pi \mathcal{L}_z q_0^t(x,\cdot).
$$
Furthermore, $\Pi \mathcal{L}_y = 0$. Hence, applying $\frac{1}{\pi(z)}\Pi$ to~\eqref{eq:order_eps0_Smolu} leaves the first and third term invariant, while eliminating the second term, from which follows that $\mathcal{L}_y q_1^t(x,\cdot) = 0$.

Therefore, applying $\Pi$ to~\eqref{eq:order_eps0_Smolu} gives
$$
\partial_t\Pi q_0^t(x,\cdot) = \Pi \mathcal{L}_z q_0^t(x,\cdot).
$$
As both $q_0^t(x,\cdot)$ and $\mathcal{L}_y g$ (for all $g$) are independent of $y$, we finally get for the evolution equation for $q_0^t$
$$
\partial_tq_0^t(x,\cdot) =\mathcal{L}_y q_0^t.
$$
Hence, with $\overline{\mathcal{L}} := \Pi \mathcal{L}_z \Pi$, the evolution equation of $q_0^t(x,\cdot)$ up to order $\varepsilon$ becomes
$$
\partial_t q_0^t(x,\cdot) = \overline{\mathcal{L}} q_0^t(x,\cdot) + \mathcal{O}(\varepsilon)\quad \text{for all } x\in\X.
$$

\section{Proof of Theorem~\ref{thm:f_variance}}
\label{sec:proof_f_variance}

The proof of
consists of simple integral and norm estimations. The main argument, used multiple times in the final proof, is formulated in the following lemma:

\begin{lemma}
	\label{lem:f_L1_estimation}
Assume that the system is $\varepsilon$-lumpable with respect to $\xi:\X\rightarrow \Z$ and the effective density $p_L:\Z\times\X\rightarrow \R$.
Let $f_L$ and $\var{\pi}$ be defined as in Theorem~\ref{thm:f_variance}. Then
\begin{equation}
	\label{eq:f_L1_estimation}
	\left\|f - f_L\circ \xi\right\|_{L^1_\mu} \leq \left\| \bar{\pi}\right\|_\infty \varepsilon.
\end{equation}
\end{lemma}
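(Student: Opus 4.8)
The plan is to exploit that $f$ (defined in \eqref{eq:MCintegrand}) and $f_L\circ\xi$ (defined in \eqref{eq:MCintegrandL}) are assembled from the \emph{same} nested level-set averages, the only difference being that the true density ratio $p(x,\cdot)/\pi(\cdot)$ appearing in $f$ is replaced by the effective ratio $p_L(\xi(x),\cdot)/\pi(\cdot)$ in $f_L\circ\xi$. Consequently the difference should be controllable, pointwise in $x$, by the lumpability defect $p(x,\cdot)-p_L(\xi(x),\cdot)$. The engine for this is the reverse triangle inequality $\big||a-b|-|c-d|\big|\le|a-c|+|b-d|$ --- precisely the ``main argument'' announced in the surrounding text. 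Applied under the two inner integrals with $a=p(x,y^{(1)})/\pi(y^{(1)})$, $b=p(x,y^{(2)})/\pi(y^{(2)})$ and with $c,d$ the corresponding $p_L$-terms, it dissolves $\big|f(x)-f_L(\xi(x))\big|$ into a sum of two symmetric terms, each of the form $\big|p(x,y^{(i)})-p_L(\xi(x),y^{(i)})\big|/\pi(y^{(i)})$.

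First I would write out $|f(x)-f_L(\xi(x))|$, move the absolute value inside the $\Z$- and level-set integrals, and apply the reverse triangle inequality. Each of the two resulting terms depends on only one of $y^{(1)},y^{(2)}$; integrating out the redundant variable over $\Sigma_\vartheta(z)$ collapses the double level-set average to a single one and, crucially, contributes the \emph{total mass} of the conditional measure on that level set --- that is, the effective invariant density $\bar\pi(z)$. Bounding this mass by $\|\bar\pi\|_\infty$ leaves a pointwise estimate of the schematic form
$$
\big|f(x)-f_L(\xi(x))\big|\;\le\;\frac{\|\bar\pi\|_\infty}{|\Z|}\int_\Z\int_{\Sigma_\vartheta(z)}\frac{\big|p(x,y)-p_L(\xi(x),y)\big|}{\pi(y)}\,d\mu_z(y)\,dz
$$
(up to the constant coming from the two symmetric terms, which I would track through the normalization of $\mu_z$).

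Next I would integrate this bound over $x$ with respect to $\mu$ and use Fubini--Tonelli to bring the $\mu$-integral innermost, producing the integrated defect $E(y):=\int_\X\big|p(x,y)-p_L(\xi(x),y)\big|\,d\mu(x)$ weighted by $1/\pi(y)$. The factor $1/\pi(y)$ cancels the stationary weight carried by $\mu_z$, so that the co-area formula disintegrating $d\mu_z(y)\,dz$ along the level sets of $\vartheta$ turns the nested integral into the plain ambient integral $\int_\X E(y)\,dy$. By Fubini this equals $\big\|p(\ast,\cdot)-p_L(\xi(\ast),\cdot)\big\|_\mathbb{K}$, whence $\tfrac{1}{|\Z|}\int_\X E(y)\,dy\le\varepsilon$ by the $\varepsilon$-lumpability hypothesis \eqref{eq:lumpability}. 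Combining the two estimates yields $\|f-f_L\circ\xi\|_{L^1_\mu}\le\|\bar\pi\|_\infty\varepsilon$.

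I expect the main obstacle to be the measure-theoretic bookkeeping rather than any analytic subtlety: the reverse triangle inequality, Fubini, and the use of \eqref{eq:lumpability} are routine, but one must handle with care how the conditional measures $\mu_z$, their normalization, and the co-area Jacobian $\det(\nabla\vartheta^\intercal\nabla\vartheta)^{-1/2}$ interact. In particular, the prefactor $\|\bar\pi\|_\infty$ is generated by the total mass of the level-set measure that is freed when one of the two inner integration variables is removed, and making the two symmetric contributions and the various normalizations collapse to exactly the clean constant $\|\bar\pi\|_\infty\varepsilon$ is the delicate part of the argument.
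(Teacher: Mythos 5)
Your proposal follows essentially the same route as the paper's proof: the reverse triangle inequality under the inner integrals, a split into two symmetric terms each depending on only one of $y^{(1)},y^{(2)}$, integration of the redundant level-set variable to produce the factor $\bar{\pi}(z)$ bounded by $\|\bar{\pi}\|_\infty$, and collapse of the remaining nested integrals via the coarea formula to the $\mathbb{K}$-norm, which the lumpability hypothesis controls by $\varepsilon$. The factor of two you flag as needing careful tracking is a real issue --- the paper's own final display actually reads $2\,\|\bar{\pi}\|_\infty\varepsilon$ rather than the $\|\bar{\pi}\|_\infty\varepsilon$ claimed in the lemma statement --- so your caution on that point is justified.
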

\begin{proof}
Writing out the left hand side of~\eqref{eq:f_L1_estimation}, we get
	\begin{align*}
\left\|f - f_L\circ \xi\right\|_{L^1_\mu}  &= \int_\X \Bigg|  \left( \frac{1}{|\Z|}\int_\Z \int_{\Sigma_\vartheta(z)} \int_{\Sigma_\vartheta(z)} \Big| \frac{p(x,y^{(1)})}{\pi(y^{(1)})} - \frac{p(x,y^{(2)})}{\pi(y^{(2)})}\Big|\ts d\mu_z(y^{(1)})\ts d\mu_z(y^{(2)})\ts dz \right)\\
&\qquad - \left( \frac{1}{|\Z|}\int_\Z \int_{\Sigma_\vartheta(z)} \int_{\Sigma_\vartheta(z)} \Big| \frac{p_L(\xi(x),y^{(1)})}{\pi(y^{(1)})} - \frac{p_L(\xi(x),y^{(2)})}{\pi(y^{(2)})}\Big|\ts d\mu_z(y^{(1)})\ts d\mu_z(y^{(2)})\ts dz\right) \Bigg|\ts d\mu(x).
\end{align*}
Applying the reverse triangle inequality, this becomes
\begin{align*}
\left\|f - f_L\circ \xi\right\|_{L^1_\mu} &\leq  \int_\X \frac{1}{|\Z|}\int_\Z \int_{\Sigma_\vartheta(z)} \int_{\Sigma_\vartheta(z)} \bigg|\frac{p(x,y^{(1)})}{\pi(y^{(1)})} - \frac{p(x,y^{(2)})}{\pi(y^{(2)})} \\
&\qquad - \frac{p_L(\xi(x),y^{(1)})}{\pi(y^{(1)})} + \frac{p_L(\xi(x),y^{(2)})}{\pi(y^{(2)})}\bigg|\ts d\mu_z(y^{(1)})\ts d\mu_z(y^{(2)})\ts \ts d\mu(x)\ts dz\\
&\leq \int_\X \frac{1}{|\Z|}\int_\Z \int_{\Sigma_\vartheta(z)} \int_{\Sigma_\vartheta(z)} \bigg|\frac{p(x,y^{(1)})}{\pi(y^{(1)})} - \frac{p_L(\xi(x),y^{(1)})}{\pi(y^{(1)})} \bigg| \ts d\mu_z(y^{(1)})\ts d\mu_z(y^{(2)})\ts dz \ts d\mu(x)\\
&\qquad + \int_\X \frac{1}{|\Z|}\int_\Z \int_{\Sigma_\vartheta(z)} \int_{\Sigma_\vartheta(z)} \bigg|\frac{p(x,y^{(2)})}{\pi(y^{(2)})} - \frac{p_L(\xi(x),y^{(2)})}{\pi(y^{(2)})} \bigg| \ts d\mu_z(y^{(1)})\ts d\mu_z(y^{(2)})\ts dz \ts d\mu(x).
\end{align*}
In each of the two summands, the ingrand is independent of $y^{(2)}$ and $y^{(1)}$, respectively, hence one integral over $\Sigma_\vartheta(z)$ becomes the factor $\bar{\pi}(z)$:
\begin{align*}
\left\|f - f_L\circ \xi\right\|_{L^1_\mu} &\leq 2\ts\int_\X \frac{1}{|\Z|}\int_\Z \int_{\Sigma_\vartheta(z)} \left| p(x,y) - p_L(\xi(x),y)\right|\bar{\pi}(z)\ts d\sigma_z(y)\ts dz\ts  d\mu(x)\\
&\leq 2\ts  \left\| \bar{\pi}\right\|_\infty \int_\X \frac{1}{|\Z|}\int_\Z \int_{\Sigma_\vartheta(z)} \left| p(x,y) - p_L(\xi(x),y)\right|\ts d\sigma_z(y)\ts dz\ts  d\mu(x)
\end{align*}
By the coarea formula, the inner two integrals simply describe the integration over $\X$ with respect to the Lebesgue measure, i.e.,
\begin{align*}
\left\|f - f_L\circ \xi\right\|_{L^1_\mu} &\leq 2 \left\| \bar{\pi}\right\|_\infty \frac{1}{|\Z|} \int_\X  \left\| p(x,\cdot) - p_L(\xi(x),\cdot)\right\|_{L^1} \ts d\mu(x).
\intertext{The integral over $\X$ is the $L^1_\mu$ norm, so the overall expression is the $\mathbb{K}$-norm (see~\eqref{eq:K_norm}):}
\left\|f - f_L\circ \xi\right\|_{L^1_\mu} & \leq 2 \left\| \bar{\pi}\right\|_\infty \frac{1}{|\mathbb{Z}|} \left\| p(\ast,\cdot) - p_L(\xi(\ast),\cdot) \right\|_\mathbb{K} \\
\shortintertext{which by the $\varepsilon$-lumpability assumption \eqref{eq:lumpability} is}
&\leq 2\ts \left\| \bar{\pi}\right\|_\infty \varepsilon.
\end{align*}

\end{proof}

The proof of the main result now consists only of reducing the difference of the variances to  the expression $\|f - f_L\circ \xi\|_{L^1_\mu}$.

\begin{proof}[Proof of Theorem~\ref{thm:f_variance}]
We have
\begin{align*}
\left| \var_\mu(f) - \var_\mu(f_L\circ\xi) \right| &= \left| \mathbb{E}_\mu\left[f^2 - (f_L\circ\xi)^2\right] - \left( \mathbb{E}_\mu[f]^2-\mathbb{E}_\mu[f_L\circ\xi]^2\right) \right|\\
	&\leq \underbrace{\left| \mathbb{E}_\mu\left[f^2 - (f_L\circ\xi)^2\right] \right|}_{=:(\star)} + \underbrace{\left| \mathbb{E}_\mu[f]^2-\mathbb{E}_\mu[f_L\circ\xi]^2\right|}_{=:(\star\star)}.
\end{align*}

For the first summand, we get
\begin{align*}
(\star) &= \left\| f^2 - (f_L\circ\xi)^2 \right\|_{L^1_\mu} = \left\|(f-f_L\circ \xi) (f+f_L\circ \xi)\right\|_{L^1_\mu} \\
&\leq \left\|f-f_L\circ \xi\right\|_{L^2_\mu} \left\|f+f_L\circ \xi\right\|_{L^2_\mu}.
\intertext{As $\mu$ is a finite measure on $\X$, we have $L^2_\mu(\X)\subset L^1_\mu(\X)$, hence there exists a $C>0$ such that}
(\star) &\leq C^2 \left\|f-f_L\circ \xi\right\|_{L^1_\mu} \left\|f+f_L\circ \xi\right\|_{L^1_\mu},
\shortintertext{which by Lemma~\ref{lem:f_L1_estimation} can be estimated as}
&\leq C^2 \|\bar{\pi}\|_\infty \varepsilon \left\|f+f_L\circ \xi\right\|_{L^1_\mu}.
\end{align*}
Using the inverse triangle inequality, the remaining factor $\left\|f+f_L\circ \xi\right\|_{L^1_\mu}$ can be estimated as
\begin{align*}
\left\|f+f_L\circ \xi\right\|_{L^1_\mu} &\leq \left\|f - f_L\circ \xi\right\|_{L^1_\mu} + 2\left\|f_L\circ \xi\right\|_{L^1_\mu}\\
&\leq \|\bar{\pi}\|_\infty \varepsilon + 2\left\|f_L\circ \xi\right\|_{L^1_\mu}.
\end{align*}
Overall, we get for the first summand
$$
(\star) \leq 2 C^2 \|f_L\circ \xi\|_{L^1_\mu} \|\bar{\pi}\|_\infty \varepsilon + C^2 \|\bar{\pi}\|_\infty^2 \varepsilon^2
$$

For the second summand, we get
\begin{align*}
(\star\star) &= \left| \left(\mathbb{E}_\mu[f]+\mathbb{E}_\mu[f_L\circ\xi]\right) \left(\mathbb{E}_\mu[f] - \mathbb{E}_\mu[f_L\circ\xi]\right) \right| \\
&\leq \left\|f+f_L\circ\xi\right\|_{L^1_\mu} \left\|f-f_L\circ\xi\right\|_{L^1_\mu}.
\end{align*}
By using Lemma~\ref{lem:f_L1_estimation}, and the same estimation of $\left\|f+f_L\circ\xi\right\|_{L^1_\mu}$ as above, this becomes
\begin{align*}
(\star\star) &\leq 2 \|f_L\circ \xi\|_{L^1_\mu} \|\bar{\pi}\|_\infty \varepsilon + \|\bar{\pi}\|_\infty^2 \varepsilon^2
\end{align*}

Overall, we receive
$$
\left| \var_\mu(f) - \var_\mu(f_L\circ\xi) \right| \leq 2(1+C^2)\|f_L\circ \xi\|_{L^1_\mu} \|\bar{\pi}\|_\infty \varepsilon + (1+C^2) \|\bar{\pi}\|_\infty^2 \varepsilon^2.
$$

\end{proof}

Finally, we show Lemma~\ref{lem:fL_variance}:

\begin{proof}[Proof of Lemma~\ref{lem:fL_variance}]
	We have
	\begin{align*}
		\var_\mu(h\circ \xi) &= \mathbb{E}_\mu[(h\circ\xi)^2]- \left(\mathbb{E}_\mu[h\circ \xi]\right)^2\\
		&= \int_\X \left(h(\xi(x))\right)^2\pi(x)\ts dx - \left(\int_\X h(\xi(x))\pi(x)\ts dx\right)^2.
		\intertext{Using the coarea formula, this becomes}
		&= \int_\Z h^2(z) \int_{\Sigma_\xi(z)} \pi(x) \ts d\sigma_z(x)\ts dz + \left(\int_\Z h(z) \int_{\Sigma_\xi(z)} \pi(x) \ts d\sigma_z(x)\ts dz\right)^2\\
		&= \int_\Z h^2(z) \bar{\pi}(z)\ts dz + \left( \int_\Z h(z) \bar{\pi}(z)\ts dz \right)^2\\
		&= \var_{\bar{\mu}}(h).
	\end{align*}
\end{proof}

\section{Lumpability of Example \ref{sec:example_brownian_motion}}
\label{sec:lumpability_brownian_motion}

For $x,y \in \mathbb{T}$,  we define the shorthand notation
$h^{\sigma}(x,y) := \left( 1 + 2 \sum_{k=1}^\infty \rho^{k^2}
\cos( k (y-x) ) \right)$, where $ \rho = \exp(- \sigma^2/2)$.
Note that since $g^\sigma(x,y) = h^\sigma(x,y) / (2\pi)$ is a density, the function
$h$ is cleary nonnegative.
For all $x,y \in \mathbb{T}^n $, we have
\begin{align}
	\abs{ p^{\tau, \infty}(x,y) - p^{\tau, \sigma}(x,y) }
	&=
	\abs{ \frac{\, h^\tau(x^{(1)},y^{(1)} )}{(2 \pi)^{n} }
		\left(
		1 - \prod_{i=2}^n h^{\tau \sigma}(x^{(i)},y^{(i)})
		\right) }	\nonumber
	\\
	&\leq
	\frac{C(\tau) }{(2\pi)^d}
	\, \max \left\{
	\max_{x,y \in \mathbb{T}^n} \abs{ 1 - \prod_{i=2}^n h^{\tau \sigma}(x^{(i)},y^{(i)}) },
	\min_{x,y \in \mathbb{T}^n} \abs{ 1 - \prod_{i=2}^n h^{\tau \sigma}(x^{(i)},y^{(i)}) }
	\right\} \nonumber
	\\
	&=
	\frac{C(\tau) }{(2\pi)^n}
	\, \max \left\{
	\abs{ 1 -  \left( 1 + 2 \sum_{k=1}^\infty (\pm 1)^k \rho^{k^2} \right)^{\! n-1} }
	\right\} \label{eq:max_wnd}
\end{align}
with the constant $C(\tau) := \max_{x^{(1)},y^{(1)} \in \mathbb{T}} h^\tau(x^{(1)},y^{(1)}) $. Here, we use the fact that for all
$\tau, \sigma > 0$, the
maximum of $\prod_{i=2}^n h^{\tau \sigma}(x^{(i)},y^{(i)})$ is attained at
$x^{(i)} = y^{(i)}$ and its minimum at $\abs{x^{(i)} - y^{(i)}} = \pi$.
When we apply the binomial theorem to
the right hand side of \eqref{eq:max_wnd}, we obtain
\begin{align}
	\abs{ 1 -  \left( 1 + 2 \sum_{k=1}^\infty (\pm 1)^k \rho^{k^2} \right)^{n-1} } &=
	\abs{ \sum_{i = 1}^{n-1}  \binom{n-1}{i} \left(2 \sum_{k=1}^\infty (\pm 1)^k \rho^{k^2} \right)^{\! i} } \nonumber \\
	& = 2(n-1) \left( \sum_{k=1}^\infty  \rho^{k^2} \right) +
		 \mathcal{O} \left(  \left(\sum_{k=1}^\infty  \rho^{k^2} \right)^{\! 2} \right)
		 \label{eq:example_binomial_thm}
	.
\end{align}
We now neglect the higher order terms in \eqref{eq:example_binomial_thm}.
Whenever $\sigma$ is large enough such that $\rho < 1$,
we have by the limit of the geometric series
\begin{align*}
	\left( \sum_{k=1}^\infty  \rho^{k^2} \right) \leq \left( \sum_{k=1}^\infty  \rho^{k} \right) =  \frac{1}{1-\rho} -1
	= \mathcal{ O}(\rho) = \mathcal{ O } \left( \exp(- (\tau \sigma)^2/2)  \right).
\end{align*}
Hence, we have shown
\begin{equation*}
	\max_{x, y \in \mathbb{T}^n} 	\abs{ p^{\tau, \infty}(x,y) - p^{\tau, \sigma}(x,y) } = 
	\left( \exp(- (\tau \sigma)^2/2)  \right).
\end{equation*}
As such, bounding both integrals
in the
norms in the lumpability condition \eqref{eq:lumpability} by the maximum above, we obtain the assertion~\eqref{eq:example_lumpability}.

Note that by neglecting the higher order terms in \eqref{eq:example_binomial_thm},
we essentially ignore the impact of the prefactor depending
on $n$ in terms of the binomial coefficient, constituting the
unknown dependence of the error for flexible $n$ as mentioned in
Section~\ref{sec:slowfast_lumpability}.
\end{appendix}

\end{document}